\newcommand{\para}{\par\vspace{.20cm}}
\newcommand{\cut}{\textsf{cut}}
\newtheorem{theorem}{Theorem}
\newtheorem{lemma}[theorem]{Lemma}
\newtheorem{cor}[theorem]{Corollary}
\begin{document}
	\baselineskip 18pt
	
	\title{\bf Group rings and the RS-property}
	\author{Gurmeet K. Bakshi  \\ {\em \small Centre for Advanced Study in
			Mathematics,}\\
		{\em \small Panjab University, Chandigarh 160014, India.}\\{\em
			\small email: gkbakshi@pu.ac.in} \and  Sugandha Maheshwary {\footnote {Research supported by DST, India (INSPIRE/04/2017/000897).} \footnote{Corresponding author}}
		\\ {\em \small Indian Institute of Science Education and Research, Mohali,}\\
		{\em \small Sector 81, Mohali (Punjab)-140306, India.}
		\\{\em \small email: sugandha@iisermohali.ac.in}
		\and Inder Bir S. Passi \\  {\em \small Centre for Advanced Study
			in Mathematics,}\\ {\em \small Panjab University, Chandigarh-160014, India} \\
		{\small \& }\\
		{\em \small Indian Institute of Science Education and Research, Mohali,}\\
		{\em \small Sector 81, Mohali (Punjab)-140306, India.}\\{\em \small email: ibspassi@yahoo.co.in } }
	\date{}
	{\maketitle}
	
	\begin{abstract}\noindent {}
		The object of this paper is to  study (infinite)  groups whose integral group rings have only trivial central units. This property is closely related to a property, here called the RS-property (\cite{DMS05}, \cite{RS90}),  involving  conjugacy in the group. 
	\end{abstract}\vspace{.25cm}
	{\bf Keywords} : integral group rings, unit group, trivial central units, FC-subgroup. \vspace{.25cm} \\
	{\bf MSC2000 :} 16U60; 16S34; 20C07; 20E06; 20E45;  20F14

	\section{Introduction}Given a group $G$, let $\mathcal{U}(\mathbb{Z}[G])$ be the group of  units of the integral group ring $\mathbb{Z}[G]$ and let $\mathcal{Z}(\mathcal U(\mathbb Z[G]))$ be its center. Trivially, $\mathcal{Z}(\mathcal U(\mathbb Z[G]))$ contains $\pm \mathcal Z(G)$, where $\mathcal{Z}(G)$ denotes the center of $G$. In case  $\mathcal{Z}(\mathcal U(\mathbb Z[G]))= \pm \mathcal Z(G)$, i.e., all central units of $\mathbb{Z}[G]$ are trivial, following \cite{BMP17}, we call $G$ a \cut-{\it group} or a group with the \cut-{\it property}. 
	The question of classifying {\sffamily cut}-groups was explicitly posed, for the first time, by Goodaire and Parmenter \cite{GP86}. As an answer, Ritter and Sehgal \cite{RS90}  gave a characterization for finite \cut-groups which was later generalized by Dokuchaev, Polcino Milies  and Sehgal \cite{DMS05} to arbitrary groups. Let us say that an element $x\in G$ of finite order has the {\it RS-property}  (or is an {\it RS-element}) in $G$ if
	\begin{equation}\label{E2}
	x^{j}\sim_{G}x^{\pm 1} \ \text{for all} ~j\in U(o(x)),
	\end{equation} 
	where $o(x)$ denotes the order of $x$, $U(n):=\{j: 1\leq j \leq n, ~gcd(j,\,n)=1\}$, and $y\sim_G z$ denotes $y$ is conjugate to $z$ in $G$.  
	Let $\Phi(G)$ denote the FC-subgroup of $G$, i.e., the subgroup consisting of those elements of $G$ which have only finitely many conjugates in $G$, and $\Phi^+(G)$ its torsion subgroup. Then the  characterization of {\sf cut}-groups given in \cite{DMS05} can be stated to say that $G$ is a {\sf cut}-group if, and only if, every element of $\Phi^+(G)$ has the RS-property in $G$. Recently, finite {\sf cut}-groups and their properties have been explored further (\cite{Bac17}-\cite{CD10},\,\cite{Mah18}; see also \cite{MP18}). Our purpose in the present work is to   examine the class of infinite {\sf cut}-groups. This class is neither subgroup-closed nor is it quotient-closed. 
	\para In Section 2, we show (Lemma \ref{P1}) that if $A$ is an RS-subgroup of a group $G$, then $A/N$ is RS-subgroup of $G/N$ for every finite normal subgroup $N$ of $G$ contained in $A$ and thus deduce (Theorem \ref{P0})  that the class of \cut-groups is closed under quotients by finite normal subgroups. We examine the class of \cut-groups under extensions (Theorem \ref{P2}) including amalgams and HNN extensions and thus provide several interesting examples of infinite \cut-groups. Extending the result on characterization of finite metacyclic \cut-groups (\cite{BMP17}, Theorem 5),  a classification (Theorem \ref{TM}) of infinite metacyclic \cut-groups  has been given. We also classify $p$-groups (Theorem \ref{p2}) and nilpotent groups (Theorem \ref{p3}) which are \cut-groups.  For a finite group $G$ and a normal subgroup $A$ of $G$, it has been  shown (Theorem \ref{L0}) that $ \mathcal Z(\mathcal U(\mathbb Z[G]))\cap  (1+ \Delta(G)\Delta(A))$ is trivial, if and only if,  the rank of  $\mathcal Z(\mathcal U(\mathbb Z[G]))$ equals that of $\mathcal Z(\mathcal U(\mathbb Z[G/A]))$, where $\Delta(G)$ denotes the augmentation ideal of $\mathbb{Z}[G]$.  Given  a finite normal subgroup $A$ of a solvable group $G$,  $G/C_{G}(A)$, where $C_{G}(A)$ denotes the centralizer of $A$ in $G$, is a finite solvable group. Thus if $G/C_{G}(A)$ is a \cut-group, then,  by (\cite{Bac17}, Theorem 1.2),  no prime other that $2$, $3$, $5$ and $7$ can divide the order of $G/C_{G}(A)$. If, in addition,  $ \mathcal Z(\mathcal U(\mathbb Z[H]))\cap  (1+ \Delta(H)\Delta(A)) $ is trivial,  then we show (Corollary \ref{c0}) that the order of $A$ is also not divisible by any prime different from $2$, $3$, $5$ and $7$,  where $H:=A\rtimes G/C_{G}(A)$.
	\para In Section 3, we consider symmetric central units; a unit $\sum u_{g}g \in \mathbb{Z}[G]$ being  symmetric if  $\sum u_{g}g = \sum u_{g}g^{-1}$.  We prove (Theorem \ref{T3}) that, modulo the trivial units, the group of central units is isomorphic to a torsion free subgroup of $\mathcal{Z}(\mathcal U(\mathbb Z[G]))$ consisting of symmetric central units. Consequently, it follows that all central units of $\mathbb{Z}[G]$ are trivial, if so are all the symmetric central units. \para Finally, in Section 4, we observe (Theorem \ref{T0}) that $\mathcal{Z}_{i}(\mathcal{U})/\mathcal{Z}_{i-1}(\mathcal{U})$ is of finite exponent for all $i \geq 2$, provided  $\mathcal{Z}(G)$ is of finite exponent or $G$ is generated by torsion elements of bounded exponent,   where $\mathcal{Z}_{i}(\mathcal{U})$ denotes the $i$th term of the upper central series of $\mathcal{U}(\mathbb{Z}[G])$. 

\section{\cut-groups}
Let $G$ be an arbitrary group. Let us  say that a  subgroup $A$ of $G$ is an {\it RS-subgroup} of $G$, if every torsion element of $A$ is an RS-element in $G$. Such subgroups are relevant for the study of \cut-groups because of the following characterization:  \begin{quote} {\it If $A$ is a normal subgroup of a finite group $G$, then 
$A$ is an \linebreak RS-subgroup of $G$ if, and only if, $\mathcal Z(\mathcal U(\mathbb Z[G]))\cap \mathbb Z[A]$ consists of trivial units (\cite{DMS05}, Theorems 8 \& 9). } \end{quote} Furthermore, a key step in the reduction of the investigation of infinite \cut-groups to that of finite groups is provided by the following:

\begin{lemma}\label{e0}(\cite{DMS05}, Lemma 10) If $G$ is an arbitrary group and $A$  a finite normal subgroup of $G$,  then there exists a finite extension  $H$ of $A$ such that  $$\mathcal{Z}(\mathcal{U}(\mathbb{Z}[G]))\cap\mathbb{Z}[A]=\mathbb{Z}(\mathcal{U}(\mathcal{Z}[H]))\cap\mathbb{Z}[A].$$\end{lemma}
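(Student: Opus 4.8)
The plan is to give an intrinsic description of $\mathcal{Z}(\mathcal{U}(\mathbb{Z}[G]))\cap \mathbb{Z}[A]$ in terms of data living entirely inside the finite ring $\mathbb{Z}[A]$ together with the finite conjugation action of $G$ on $A$, and then to realize exactly the same data by a finite group $H$. The first step is a reduction of invertibility: for \emph{any} subgroup $A\le G$ I would show $\mathcal{U}(\mathbb{Z}[G])\cap \mathbb{Z}[A]=\mathcal{U}(\mathbb{Z}[A])$. Consider the $\mathbb{Z}$-linear truncation $\pi\colon \mathbb{Z}[G]\to \mathbb{Z}[A]$, $\sum_g c_g g\mapsto \sum_{a\in A} c_a a$. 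Since $bg\in A\iff g\in A\iff gb\in A$ for $b\in A$, the map $\pi$ is both left and right $\mathbb{Z}[A]$-linear. Hence if $u\in\mathbb{Z}[A]$ is a unit of $\mathbb{Z}[G]$ with inverse $v$, applying $\pi$ to $uv=vu=1$ yields $u\,\pi(v)=\pi(v)\,u=1$, so $\pi(v)\in\mathbb{Z}[A]$ is a two-sided inverse of $u$ and $u\in \mathcal{U}(\mathbb{Z}[A])$.

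The second step reduces centrality. Because $A$ is normal, conjugation by $g\in G$ preserves $\mathbb{Z}[A]$, and for $u\in\mathbb{Z}[A]$ the condition that $u$ be central in $\mathbb{Z}[G]$ is precisely that $u$ be fixed by the conjugation action of $G$ on $\mathbb{Z}[A]$. This action factors through the image $\bar G$ of the map $G\to \Aut(A)$, $g\mapsto(a\mapsto gag^{-1})$, and $\bar G$ is finite since $A$ is. Combining with the first step,
\[
\mathcal{Z}(\mathcal{U}(\mathbb{Z}[G]))\cap \mathbb{Z}[A]=\mathcal{U}(\mathbb{Z}[A])^{\bar G},
\]
the group of $\bar G$-fixed units of $\mathbb{Z}[A]$ --- a quantity that depends on $G$ only through the finite subgroup $\bar G\le \Aut(A)$.

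For the realization I would take $H:=A\rtimes \bar G$, where $\bar G$ acts on $A$ through its inclusion $\bar G\hookrightarrow \Aut(A)$. Then $H$ is finite and contains $A$ as a normal subgroup, and a direct computation of conjugation in the semidirect product shows that the image of $H\to\Aut(A)$ is $\langle \Inn(A),\bar G\rangle=\bar G$; here one uses that $\Inn(A)\le\bar G$ already, since the inner automorphisms of $A$ arise from conjugation by elements of $A\le G$. Applying the first two steps verbatim to $H$ in place of $G$ gives $\mathcal{Z}(\mathcal{U}(\mathbb{Z}[H]))\cap \mathbb{Z}[A]=\mathcal{U}(\mathbb{Z}[A])^{\bar G}$ as well, so the two intersections coincide.

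I expect the invertibility descent of the first step to be the point requiring the most care: it is what guarantees that a central unit of the infinite ring $\mathbb{Z}[G]$ which happens to lie in $\mathbb{Z}[A]$ is genuinely invertible inside the finite ring $\mathbb{Z}[A]$, so that ``being a unit'' is detected locally. Everything afterwards is bookkeeping with the finite automorphism group $\bar G$. A secondary point to verify is that the chosen $H$ induces precisely $\bar G$, and not a larger subgroup of $\Aut(A)$, which is exactly where the observation $\Inn(A)\le\bar G$ is needed.
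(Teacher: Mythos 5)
Your proposal is correct: the unit-descent step via the $\mathbb{Z}[A]$-bimodule projection $\pi$, the identification $\mathcal{Z}(\mathcal{U}(\mathbb{Z}[G]))\cap\mathbb{Z}[A]=\mathcal{U}(\mathbb{Z}[A])^{\bar G}$, and the realization $H=A\rtimes\bar G$ (using $\Inn(A)\le\bar G$ to see that $H$ induces exactly $\bar G$ on $A$) are all sound. The paper itself offers no proof of this lemma, quoting it from \cite{DMS05}, but your argument is essentially the cited one: since $\bar G\cong G/C_{G}(A)$, your $H$ is precisely the group $A\rtimes G/C_{G}(A)$ that the paper invokes later in Theorem \ref{p3} and Corollary \ref{c0}.
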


The various known criteria, available in  \cite{DMS05}, \cite{JJMR02} and  \cite{RS90}  for a group to be a  \cut-group can be put together as follows:

\begin{theorem}\label{p5}
	For every group $G$,  the following statements are equivalent:
	\begin{description}
		\item[(i)] $G$ is a {\sf cut}-group.
		\item[(ii)] $\Phi^{+}(G)$ is an RS-subgroup of $G$.
		\item[(iii)] $\mathcal{Z}(\mathcal{U}(\mathbb{Z}[G]))\cap \mathbb{Z}[\Phi^{+}(G)] = \pm T(\mathcal Z(G))$, where  $T(\mathcal Z(G))$ denotes the torsion subgroup of  $\mathcal Z(G)$.
		\item[(iv)] $\pm G = \mathcal{N}_{ \mathcal{U}}(G)$, the normalizer of $G$ in $\mathcal{U}:= \mathcal{U}(\mathbb{Z}[G]).$	\end{description}
\end{theorem}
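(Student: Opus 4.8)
The plan is to treat the four statements as equivalences anchored at (i), establishing (i)$\Leftrightarrow$(ii), (ii)$\Leftrightarrow$(iii) and (i)$\Leftrightarrow$(iv), in each case transporting the finite-group criteria of \cite{RS90} and \cite{DMS05} to the present (possibly infinite) setting by means of Lemma \ref{e0}. The structural fact I would isolate at the outset is that every central unit of $\mathbb{Z}[G]$ is supported on $\Phi(G)$: if $u=\sum u_{g}g$ is central then $u_{h^{-1}gh}=u_{g}$ for all $h$, so the support is a finite union of conjugacy classes, each of which must therefore be finite. Restricting to $\mathbb{Z}[\Phi^{+}(G)]$ then isolates exactly those central units whose support is torsion. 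The equivalence (i)$\Leftrightarrow$(ii) is essentially definitional: $\Phi^{+}(G)$ consists entirely of torsion elements, so ``$\Phi^{+}(G)$ is an RS-subgroup of $G$'' says precisely that every element of $\Phi^{+}(G)$ is an RS-element of $G$, which is the characterization of \cut-groups recalled in the introduction (\cite{DMS05}); I would simply cite this.

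For (ii)$\Leftrightarrow$(iii) the key is a localization to finite normal subgroups. Given $u\in\mathcal{Z}(\mathcal{U}(\mathbb{Z}[G]))\cap\mathbb{Z}[\Phi^{+}(G)]$, its support $S$ is a finite, conjugation-closed subset of $\Phi^{+}(G)$; since a finitely generated periodic FC-group is finite (periodic FC-groups being locally finite), the subgroup $A_{0}:=\langle S\rangle$ is finite, and it is normal in $G$ because $S$ is $G$-invariant. Thus $u\in\mathbb{Z}[A_{0}]$ with $A_{0}$ finite and normal in $G$. Applying Lemma \ref{e0} to $A_{0}$ produces a finite extension $H_{0}$ with $\mathcal{Z}(\mathcal{U}(\mathbb{Z}[G]))\cap\mathbb{Z}[A_{0}]=\mathcal{Z}(\mathcal{U}(\mathbb{Z}[H_{0}]))\cap\mathbb{Z}[A_{0}]$, and since $H_{0}$ is built (in effect) as $A_{0}\rtimes G/C_{G}(A_{0})$, its conjugation action on $A_{0}$ agrees with that of $G$, so an element of $A_{0}$ is an RS-element of $H_{0}$ exactly when it is one of $G$. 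Now \cite{DMS05} (Theorems 8 \& 9) applies in the finite group $H_{0}$. Assuming (ii), every element of $A_{0}$ is RS in $G$, hence in $H_{0}$, so $A_{0}$ is an RS-subgroup of $H_{0}$, whence $\mathcal{Z}(\mathcal{U}(\mathbb{Z}[H_{0}]))\cap\mathbb{Z}[A_{0}]$ is trivial and therefore $u=\pm g$ with $g\in A_{0}\cap\mathcal{Z}(G)\subseteq T(\mathcal{Z}(G))$; as the reverse inclusion $\pm T(\mathcal{Z}(G))\subseteq\mathcal{Z}(\mathcal{U})\cap\mathbb{Z}[\Phi^{+}(G)]$ is immediate, (iii) follows. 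Conversely, assuming (iii), for each $x\in\Phi^{+}(G)$ I would take $A_{0}=\langle x^{G}\rangle$, again finite and normal; then $\mathcal{Z}(\mathcal{U})\cap\mathbb{Z}[A_{0}]\subseteq\pm T(\mathcal{Z}(G))$ consists of trivial units, so by Lemma \ref{e0} and \cite{DMS05} the subgroup $A_{0}$ is RS in $H_{0}$, forcing $x$ to be an RS-element of $H_{0}$ and hence of $G$; thus (ii) holds.

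The equivalence (i)$\Leftrightarrow$(iv) I would split into an elementary direction and a cited one. For (iv)$\Rightarrow$(i): since $\mathcal{Z}(\mathcal{U})\subseteq\mathcal{N}_{\mathcal{U}}(G)=\pm G$, every central unit has the form $\pm g$ with $g\in G$, and centrality forces $g\in\mathcal{Z}(G)$, giving $\mathcal{Z}(\mathcal{U})\subseteq\pm\mathcal{Z}(G)$; the reverse containment is automatic, so $G$ is a \cut-group. The converse (i)$\Rightarrow$(iv) is the substantive half and is where I would invoke \cite{JJMR02}: writing $\sigma_{u}$ for the automorphism of $G$ induced by a normalizing unit $u$, the normalizer analysis there, combined with (i), forces $\sigma_{u}$ to be inner, so that $g^{-1}u$ centralizes $G$ for some $g\in G$; since the centralizer of $G$ in $\mathcal{U}$ is $\mathcal{Z}(\mathcal{U})=\pm\mathcal{Z}(G)$, this yields $u\in\pm G$, proving $\mathcal{N}_{\mathcal{U}}(G)=\pm G$.

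I expect the main obstacle to be the passage from the infinite group $G$ to the finite arena in which the criteria of \cite{RS90} and \cite{DMS05} are available: concretely, verifying that the support of a central unit generates a \emph{finite} normal subgroup (which rests on the local finiteness of periodic FC-groups) and, more delicately, that the RS-property of an element of $A_{0}$ is insensitive to whether it is tested in $G$ or in the finite extension $H_{0}$ furnished by Lemma \ref{e0}. The direction (i)$\Rightarrow$(iv) is not an obstacle in the sense of requiring new work here, since it is imported from \cite{JJMR02}, but it is the point at which the argument is least self-contained.
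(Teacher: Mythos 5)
Your proposal is correct and is, in essence, the route the paper takes: Theorem \ref{p5} is stated with no proof at all, being presented as an assembly of the known criteria of \cite{RS90}, \cite{DMS05} and \cite{JJMR02}, and your argument reconstructs that assembly from exactly those ingredients (the characterization of \cut-groups via RS-elements of $\Phi^{+}(G)$, Theorems 8--9 of \cite{DMS05} together with Lemma \ref{e0}, and the normalizer result of \cite{JJMR02}), with the glue steps (Dietzmann-type finiteness of the subgroup generated by the support, and the reverse inclusion $\pm T(\mathcal Z(G))\subseteq \mathcal{Z}(\mathcal{U}(\mathbb{Z}[G]))\cap\mathbb{Z}[\Phi^{+}(G)]$) filled in correctly. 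The one point where you use more than the stated results --- that the finite extension $H_{0}$ of Lemma \ref{e0} is in effect $A_{0}\rtimes G/C_{G}(A_{0})$, so that conjugation on $A_{0}$, and hence the RS-property of its elements, is the same tested in $G$ or in $H_{0}$ --- is legitimate and is precisely what the paper itself exploits elsewhere (see the proofs of Theorem \ref{p3} and Corollary \ref{c0}).
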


Clearly, the property of being an RS-subgroup is   subgroup-closed. This property  also turns out to be closed under taking quotients by finite normal subgroups. To be precise, we have 
\begin{lemma}\label{P1}
	Let $G$ be a group and $A$ a subgroup of $G$. Let $N$ be a finite normal subgroup of $G$ contained in $A$. If $A$ is an RS-subgroup of $G$, then $A/N$ is an RS-subgroup of $G/N$.  
\end{lemma}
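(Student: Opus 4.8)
The plan is to take an arbitrary torsion element of $A/N$, lift it to a torsion element of $A$, invoke the hypothesis that $A$ is an RS-subgroup of $G$, and then push the resulting conjugacy relations down to $G/N$ via the natural projection $\pi\colon G \to G/N$. Normality of $N$ is needed only to form the quotients, while the finiteness of $N$ will be used in exactly one place: to guarantee that a torsion element of the quotient lifts to a torsion element upstairs (the RS-property being defined only for elements of finite order). Everything after that lift is essentially functorial, so the genuine content lies in the lifting steps.

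Concretely, let $\bar x = xN \in A/N$ have finite order $n$, where $x \in A$. Then $x^n \in N$, and since $N$ is finite, $x^n$ has finite order; hence $x$ itself is a torsion element of $A$. Write $m = o(x)$. Since $x^m = 1$ forces $\bar x^{\,m} = \bar 1$, we have $n \mid m$. Because $A$ is an RS-subgroup of $G$, the element $x$ is an RS-element of $G$, so $x^{j'} \sim_G x^{\pm 1}$ for every $j' \in U(m)$, and it remains to convert this into the corresponding statement for $\bar x$ in $G/N$ over the (smaller) index set $U(n)$.

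The step I expect to require actual thought is the number-theoretic matching of the index sets. I would fix $j \in U(n)$ and produce an integer $j'$ with $j' \equiv j \pmod n$ and $\gcd(j', m) = 1$; after replacing $j'$ by its residue modulo $m$, I may assume $j' \in U(m)$. Such a $j'$ exists precisely because the reduction homomorphism $(\mathbb{Z}/m\mathbb{Z})^{\times} \to (\mathbb{Z}/n\mathbb{Z})^{\times}$ is surjective when $n \mid m$, a standard consequence of the Chinese Remainder Theorem: writing $j' = j + kn$, each prime $p \mid m$ with $p \mid n$ automatically fails to divide $j'$ (as $p \nmid j$ but $p \mid kn$), while for each prime $p \mid m$ with $p \nmid n$ one must only choose $k$ outside the single residue class modulo $p$ for which $p \mid j + kn$, and at least one admissible class exists since $p \geq 2$. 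Assembling these congruences on $k$ via CRT yields the desired $j'$. This elementary lifting is the main technical ingredient.

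Finally, applying the RS-property of $x$ in $G$ gives $x^{j'} \sim_G x^{\pm 1}$, and projecting under $\pi$ yields $\bar x^{\,j'} \sim_{G/N} \bar x^{\pm 1}$. Since $j' \equiv j \pmod n$ and $o(\bar x) = n$, we have $\bar x^{\,j'} = \bar x^{\,j}$, whence $\bar x^{\,j} \sim_{G/N} \bar x^{\pm 1}$ for every $j \in U(n)$. Thus $\bar x$ is an RS-element of $G/N$, and as $\bar x$ was an arbitrary torsion element of $A/N$, it follows that $A/N$ is an RS-subgroup of $G/N$, completing the proof.
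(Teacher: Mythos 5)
Your proof is correct, but it resolves the crux of the lemma by a different device than the paper. The shared skeleton is the same in both arguments: use finiteness of $N$ to see that a torsion element of $A/N$ lifts to a torsion element of $A$, apply the RS-property upstairs, and project conjugacy down to $G/N$. The real difficulty in both cases is the mismatch between $U(o(\bar x))$ and $U(o(\text{lift}))$, and here the two arguments diverge dually: you fix an arbitrary representative $x$ and adjust the \emph{exponent}, replacing $j\in U(n)$ by some $j'\equiv j \pmod n$ with $j'\in U(m)$, which you justify by the surjectivity of the reduction map $(\mathbb{Z}/m\mathbb{Z})^{\times}\to(\mathbb{Z}/n\mathbb{Z})^{\times}$ via a Chinese Remainder Theorem argument; the paper instead fixes the exponent $j$ and adjusts the \emph{representative}, choosing $a_0$ of minimal order in the coset $aN$ and observing that, for $i$ with $ij\equiv 1 \pmod{o(\bar a)}$, the element $a_0^{ij}$ lies in the same coset, so minimality forces $o(a_0^{ij})=o(a_0)$, hence $\gcd(j,o(a_0))=1$ automatically. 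The paper's trick stays entirely inside elementary group theory (only the formula $o(a^k)=o(a)/\gcd(k,o(a))$ is used) and needs no congruence juggling, while your route imports a standard but external number-theoretic fact; in exchange, your argument works with any lift and makes the mechanism transparent --- it is exactly the statement that units lift along $\mathbb{Z}/m\mathbb{Z}\to\mathbb{Z}/n\mathbb{Z}$ --- which is arguably the more reusable formulation. Both proofs are complete and of comparable length.
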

\begin{proof}
	Let $\overline{a}:= aN\in A/N$ be a torsion element. We need to check that for all $j \in U(o(\overline{a}))$, $\overline{a}^{j}\sim_{G/N}\overline{a}^{\pm1}$. Since $N$ is finite, every element of the coset $aN$ is a torsion element. Let $a_{0}$ be an element of minimal order in the coset $aN$. Let $i$ be such that $ij \equiv 1 \mod o(\overline{a}) $, so that $\overline{a}_{0}^{ij} = \overline{a}_{0}$,  and consequently, by the minimality of the order of $a_{0} $, $o(a_{0}^{ij})=o(a_{0})$. This gives  $\gcd(ij,\,o(a_{0}))=1$ and hence $\gcd(j,o(a_{0}))=1$. Since by assumption $A$ is an RS-subgroup of $G$, we have $a_{0}^{j}\sim_{G}a_{0}^{\pm1}$, which yields  $(\overline{a})^{j}\sim_{G/N}\overline{a}^{\pm1}$. 
\end{proof}

If $G$ is a finite {\sf cut}-group, then so is $G/N$ for every $N\unlhd G$ \cite{RS90}. As a generalization of this fact, we have  the following:

\begin{theorem}\label{P0}
		If $G$ is a {\sf cut}-group, then $G/N$ is a {\sf cut}-group for every finite normal subgroup $N$ of $G$.
\end{theorem}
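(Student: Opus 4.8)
The plan is to use the characterization in Theorem \ref{p5}: since $G$ is a \cut-group, $\Phi^{+}(G)$ is an RS-subgroup of $G$, and to prove that $G/N$ is a \cut-group it suffices to show that $\Phi^{+}(G/N)$ is an RS-subgroup of $G/N$. Write $\pi\colon G\to G/N$ for the natural projection. First I would observe that $N\subseteq\Phi^{+}(G)$: being finite and normal, $N$ consists of torsion elements, each having at most $|N|$ conjugates, so $N$ lies in the torsion FC-subgroup. Hence $N\unlhd\Phi^{+}(G)$, and Lemma \ref{P1} applies with $A=\Phi^{+}(G)$, yielding that $\Phi^{+}(G)/N=\pi(\Phi^{+}(G))$ is an RS-subgroup of $G/N$.

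The crux is to relate $\Phi^{+}(G/N)$ to $\Phi^{+}(G)/N$; I claim $\Phi^{+}(G/N)\subseteq\Phi^{+}(G)/N$. Take $\overline{x}=xN\in\Phi^{+}(G/N)$. Since $\overline{x}$ is torsion, $x^{m}\in N$ for some $m$, and as $N$ is finite this forces $x$ to be torsion. It remains to check that $x\in\Phi(G)$, i.e.\ that $\ccl_{G}(x)$ is finite. Here I would use that $\pi$ maps $\ccl_{G}(x)$ onto $\ccl_{G/N}(\overline{x})$, which is finite because $\overline{x}\in\Phi(G/N)$; since every fibre of $\pi$ has size $|N|<\infty$, the preimage $\pi^{-1}\bigl(\ccl_{G/N}(\overline{x})\bigr)$ is finite, and $\ccl_{G}(x)$ is contained in it. Thus $\ccl_{G}(x)$ is finite, so $x\in\Phi^{+}(G)$ and $\overline{x}\in\Phi^{+}(G)/N$, proving the claimed inclusion.

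Finally, I would invoke the fact, noted just before Lemma \ref{P1}, that the RS-subgroup property is subgroup-closed: as $\Phi^{+}(G/N)$ is a subgroup of the RS-subgroup $\Phi^{+}(G)/N$ of $G/N$, it is itself an RS-subgroup of $G/N$. By the equivalence (i)$\Leftrightarrow$(ii) of Theorem \ref{p5}, this shows that $G/N$ is a \cut-group.

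The main obstacle is the finiteness argument controlling the FC-property under the quotient, namely the inclusion $\Phi^{+}(G/N)\subseteq\Phi^{+}(G)/N$: passing to a quotient can a priori shrink conjugacy classes and thereby create new FC-elements, so the essential point is that the finiteness of $N$ keeps the full conjugacy class $\ccl_{G}(x)$ finite whenever its image is. Everything else is a direct assembly of Lemma \ref{P1}, subgroup-closedness of the RS-property, and Theorem \ref{p5}.
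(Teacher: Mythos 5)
Your proof is correct, and it shares the paper's skeleton---reduce via Theorem \ref{p5} to showing that $\Phi^{+}(G/N)$ is an RS-subgroup of $G/N$, with Lemma \ref{P1} doing the real work---but the bridging step is genuinely different. The paper quantifies over finite normal subgroups of the quotient: any finite normal subgroup $A/N$ of $G/N$ pulls back to a finite normal subgroup $A$ of $G$, which is an RS-subgroup of $G$ because it sits inside $\Phi^{+}(G)$; Lemma \ref{P1} then makes $A/N$ an RS-subgroup of $G/N$. To pass from this to the conclusion that $\Phi^{+}(G/N)$ itself is an RS-subgroup, the paper implicitly relies on the fact that every torsion FC-element of $G/N$ lies in some finite normal subgroup of $G/N$ (Dietzmann's lemma: the finitely many conjugates of a torsion FC-element generate a finite normal subgroup). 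You instead apply Lemma \ref{P1} just once, to $A=\Phi^{+}(G)$---legitimate, since the lemma requires only $N$, not $A$, to be finite---and then prove directly the inclusion $\Phi^{+}(G/N)\subseteq\Phi^{+}(G)/N$ by fiber counting: $\ccl_{G}(x)$ is contained in the preimage of the finite class $\ccl_{G/N}(\overline{x})$, which is a finite union of cosets of the finite group $N$. Your route therefore makes explicit, and proves elementarily, exactly the point the paper glosses over, at the cost of a slightly longer argument; the paper's route is shorter but leans on the covering of $\Phi^{+}$ by finite normal subgroups. Both versions correctly use the subgroup-closedness of the RS-property (noted before Lemma \ref{P1}) and the standard fact that the torsion elements of an FC-group form a subgroup, which is what lets you conclude from ``$x$ is torsion and has finite conjugacy class'' that $x\in\Phi^{+}(G)$.
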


\begin{proof}
	Let $A/N$ be a finite normal subgroup of $G/N$. Since $N$ is finite,  $A$ is a finite normal subgroup of  $G$ and thus it is an RS-subgroup of $G$. By Lemma \ref{P1}, $A/N$ is an RS-subgroup of $G/N$. Consequently, $\Phi^{+}(G/N)$ is an RS -subgroup of $G/N$. Hence Theorem \ref{p5} yields that   $G/N$ is a \cut-group.
\end{proof}

We next consider the behaviour of \cut-groups under extensions. Note that, in view of Theorem \ref{p5},  every group $G$ with $\Phi^{+}(G) = \{1\}$ is a \cut-group; in particular, a torsion-free group is a \cut-group.  
 
\begin{theorem}\label{P2}  
	\begin{description}
\item[(i)] Let $G$ be a normal subgroup of the group $\Pi$ and $Q=\Pi/G$. 
	\begin{description}
		\item[(a)]
		If  $Q$ is a  \cut-group, and $G\cap \Phi^{+}(\Pi)= \{1\}$ (in particular, if   $\Phi^+(G)=\{1\}$),  then $\Pi$ is a \cut-group.
		\item[(b)] If $\Phi^+(Q)=\{1\}$,  and $\Phi^+(G)$ is an RS-subgroup of $\Pi$ (in particular, if   $G$ is a \cut-group),  then $\Pi$ is a \cut-group. 
			\end{description}
	\item[(ii)] If  $\Pi= G \ast_{A} G'  $ is an amalgam  of arbitrary groups $G$ and $G'$ with the amalgamated subgroup $A$ an RS-subgroup  of $G$ or $G'$, then  $G$ is a \cut-group, provided  $A \neq G$ and $A \neq G'$.   In particular, the free product of arbitrary non-trivial groups is a \cut-group. 
	 \item[(iii)] If $\Pi$ is an HNN extension of a group $G$ over isomorphic subgroups $A$ and $B$ such that one of $A$ or $B$ is an RS-subgroup of $G$, then $\Pi$ is a \cut-group. 	\end{description} 
\end{theorem}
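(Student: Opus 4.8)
The plan is, in every case, to establish the \cut-property for $\Pi$ by verifying the criterion of Theorem \ref{p5}(ii): that $\Phi^{+}(\Pi)$ is an RS-subgroup of $\Pi$, i.e.\ that every element $x$ that is simultaneously torsion and contained in a finite $\Pi$-conjugacy class is an RS-element of $\Pi$. Two elementary observations will be used throughout. First, the RS-property is invariant under $\Pi$-conjugacy: if $y^{j}\sim_{\Pi}y^{\pm1}$ for all $j\in U(o(y))$ and $z\sim_{\Pi}y$, then, as $o(z)=o(y)$, also $z^{j}\sim_{\Pi}z^{\pm1}$ for all $j\in U(o(z))$. Second, an RS-element of a subgroup $H\le\Pi$ is an RS-element of $\Pi$, since conjugacy in $H$ forces conjugacy in $\Pi$ while the order is unchanged. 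Recall also that $\Phi^{+}(\Pi)$ is a characteristic, hence normal, subgroup of $\Pi$.

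For part (i) I would work directly with the projection $\pi\colon\Pi\to Q=\Pi/G$. If $x\in\Phi^{+}(\Pi)$ is torsion, then $\bar{x}:=\pi(x)$ is torsion and has a finite conjugacy class in $Q$, so $\bar{x}\in\Phi^{+}(Q)$. In case (a), the hypothesis $G\cap\Phi^{+}(\Pi)=\{1\}$ makes $\pi$ injective on $\Phi^{+}(\Pi)$, whence $o(\bar{x})=o(x)$; since $Q$ is a \cut-group, $\bar{x}^{\,j}\sim_{Q}\bar{x}^{\pm1}$ for every $j\in U(o(x))$, and lifting one such relation produces $g\in\Pi$ with $x^{\mp1}\,g\,x^{j}\,g^{-1}\in G$. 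But $g\,x^{j}\,g^{-1}$ and $x^{\pm1}$ both lie in the normal subgroup $\Phi^{+}(\Pi)$, so their product does too; it therefore lies in $G\cap\Phi^{+}(\Pi)=\{1\}$, giving $x^{j}\sim_{\Pi}x^{\pm1}$. In case (b), $\Phi^{+}(Q)=\{1\}$ forces $\bar{x}=1$, so $x\in G$; being torsion with finite $\Pi$-class, and hence finite $G$-class, $x$ lies in $\Phi^{+}(G)$, and the assumption that $\Phi^{+}(G)$ is an RS-subgroup of $\Pi$ completes the argument. The parenthetical cases are immediate: $\Phi^{+}(G)=\{1\}$ yields $G\cap\Phi^{+}(\Pi)=\{1\}$, and if $G$ is a \cut-group then $\Phi^{+}(G)$ is an RS-subgroup of $G$, hence of $\Pi$.

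For parts (ii) and (iii) I would pass to the Bass--Serre tree $T$ associated with the splitting of $\Pi$, whose vertex stabilizers are the conjugates of the free factor(s) and whose edge stabilizers are the conjugates of the amalgamated (respectively associated) subgroup $A$. The crux is the structural claim that \emph{every $x\in\Phi^{+}(\Pi)$ is conjugate into $A$}. Indeed, a torsion element acts elliptically and so fixes a vertex, while its finite conjugacy class means $C_{\Pi}(x)$ has finite index in $\Pi$. Were the fixed subtree of $x$ a single vertex $v$, then $C_{\Pi}(x)$ would preserve $\{v\}$ and hence lie inside the stabilizer of $v$ — but under the standing hypotheses ($A\neq G$ and $A\neq G'$ for the amalgam, and for any HNN extension) each vertex stabilizer has infinite index in $\Pi$, since its orbit of vertices is infinite; a contradiction. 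Thus $x$ fixes an edge and lies in some conjugate $g A g^{-1}$. Then $g^{-1}xg$ is a torsion element of $A$; since $A$, or its $\Pi$-conjugate $B$, is by hypothesis an RS-subgroup of $G$ (or $G'$) — and conjugates of $A$ and of $B$ coincide in $\Pi$, so we may place $g^{-1}xg$ in the chosen RS-subgroup — it is an RS-element of the relevant factor, hence of $\Pi$, and by conjugacy-invariance so is $x$. Therefore $\Phi^{+}(\Pi)$ is an RS-subgroup and $\Pi$ is a \cut-group. The free-product assertion is the case $A=\{1\}$, which is vacuously an RS-subgroup.

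The main obstacle is the structural claim underlying (ii) and (iii): that finiteness of the conjugacy class of a torsion element forces it to fix an edge, and so to be conjugate into $A$. Everything else is bookkeeping with the order-preserving, conjugacy-stable RS-property. I expect the points needing care to be the verification that vertex stabilizers genuinely have infinite index under the stated hypotheses — in particular the boundary case $[G:A]=[G':A]=2$ of the amalgam (where $A$ is normal and $\Pi/A\cong C_{2}\ast C_{2}$) and the ascending case for the HNN extension — together with arranging the action to be without inversions, so that torsion elements fix vertices rather than merely inverting edges; a barycentric subdivision of $T$ secures the latter if necessary.
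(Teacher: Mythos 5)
Your proposal is correct. For part (i) it follows essentially the same route as the paper: lift the conjugacy relation from $Q$, observe that the discrepancy $x^{\mp1}gx^{j}g^{-1}$ lies in $G\cap\Phi^{+}(\Pi)$, and for (b) reduce everything to $\Phi^{+}(\Pi)\subseteq\Phi^{+}(G)$; the paper phrases this with finite normal subgroups $H$ of $\Pi$ rather than with single elements of $\Phi^{+}(\Pi)$, but the computation is identical. The genuine divergence is in parts (ii) and (iii). The paper never touches the Bass--Serre tree directly: it quotes de Cornulier (\cite{Cor09}, Propositions 1 and 3), which yield the stronger containment $\Phi^{+}(\Pi)\leq A$ (resp.\ $\Phi^{+}(\Pi)\leq A\cap B$), but only under the hypotheses $[G:A]\geq 3$ or $[G':A]\geq 3$ (resp.\ $A$ or $B$ proper in $G$); the boundary cases then require separate treatment --- when $[G:A]=[G':A]=2$ the paper uses $A\unlhd\Pi$, $\Pi/A\cong C_{2}\ast C_{2}$ and part (i)(b), and when $A=B=G$ in the HNN case it uses normality of $G$ and part (i). You instead inline a tree-theoretic argument and settle for the weaker conclusion that every element of $\Phi^{+}(\Pi)$ is merely \emph{conjugate into} $A$; since the RS-property is preserved under conjugation (conjugation preserves orders and conjugacy classes), this weaker statement suffices, and it buys uniformity: no case split on indices, and the degenerate and ascending HNN cases come for free. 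What the paper's route buys is brevity, a citation replacing the geometry. Two steps in your sketch do need to be written out, but both are standard and true: that vertex stabilizers have infinite index under the stated hypotheses (for the amalgam take $g\in G\setminus A$, $g'\in G'\setminus A$, so that $gg'$ is hyperbolic on the tree and vertex orbits are infinite; for an HNN extension use the retraction $\Pi\to\mathbb{Z}$ killing $G$), and that Bass--Serre actions are without inversions by construction, so the barycentric subdivision you mention as a safeguard is not actually needed.
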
  
\begin{proof} (i) Let $f:\Pi\longrightarrow Q$ be the canonical epimorphism and let $Q$  be  a  \cut-group. Let $H$ be a finite normal subgroup of $\Pi$, so that $f(H)$  is a finite normal subgroup of the \cut-group $Q$. Let $h \in H$ and $j$  a positive integer coprime to $o(h)$. Clearly then  $j$ is relatively prime to $o(f(h))$, and hence $${f(h)}^{j}\sim_{Q}{f(h)}^{\pm 1},$$ i.e., there exists $q\in Q$ such that 
\begin{equation}\label{e1}
q^{-1}{f(h)}^{j}q={f(h)}^{\pm 1}.
\end{equation}
Let $y\in\Pi$ be such that $f(y)=q$. In view of Eq.\,(\ref{e1}), we have $f(y^{-1}h^jyh^{\mp1})=1$. Observe that $y^{-1}h^jyh^{\mp 1}\in G\cap \Phi^{+}(\Pi)$. Now, if $G\cap \Phi^{+}(\Pi)= \{1\}$,  then $y^{-1}h^jyh^{\mp 1}=1$, i.e., $h^{j}\sim_{\Pi} h^{\pm 1}$. Hence $H$ is an RS-subgroup of $\Pi$  and consequently, $\Pi$ is a \cut-group. This proves (a). 
Next consider  $\Phi^+(Q)=\{1\}$.  Then every finite normal subgroup of $\Pi$ is necessarily  contained in $G$. Therefore, $\Phi^+(\Pi)$ is contained in $G$, and hence in $\Phi^+(G)$. Now if, $\Phi^+(G)$ is an RS-subgroup of $\Pi$, then it follows that $\Phi^+(\Pi)$ is an RS-subgroup of $\Pi$ and therefore $\Pi$ is a \cut-group. Finally, we can see that if $G$ is a \cut-group, then $\Phi^+(G)$ is an RS-subgroup of $G$ and therefore also of $\Pi$. This proves (b). \vspace{.3cm} \\ (ii) 	Let  $\Pi= G \ast_{A} G'  $ be an amalgam  of arbitrary groups $G$ and $G'$ with the amalgamated subgroup $A$  an RS-subgroup  of $G$ or $G'$. Suppose that  $A \neq G$ and $A \neq G'$.   \para If the index of $A$ in both $G$ and $G'$ is two, then $\Pi/A$ is isomorphic to the  free product $C_{2} \ast C_{2}$ and hence $\Phi^+(\Pi/A)=\{1\}$. Also $A$ an RS-subgroup of one of $G$ or $G'$ implies that  $A$ is an RS-subgroup of $\Pi$. Consequently $\Phi^+(A)$ is an RS-subgroup of $\Pi$. Thus (i)(b) yields that $\Pi$ is a \cut-group. \para If the index of $A$ is atleast  three in either $G$ or  $G'$, then, by (\cite{Cor09}, Proposition 1),  $\Phi^{+}(\Pi)$ is a subgroup of  $A$.  Since $A$ is an RS-subgroup of either $G$ or $G'$, $A$ is also an RS-subgroup of $\Pi$.  Consequently,  $\Phi^{+}(\Pi)$ is an RS-subgroup of $\Pi$, and hence  $\Pi$ is a \cut-group. \vspace{.3cm}  \\ (iii) Let $\Pi$ be  an HNN extension of a group $G$ over isomorphic subgroups   $A$ and $B$ of $G$. If either  $A$ or $B$ is a proper subgroup of $G$, then, by (\cite{Cor09}, Proposition 3), $\Phi^{+}(\Pi)$ is a subgroup of both $A$ and $B$.  Since one of $A$ or $B$ is an RS-subgroup of $G$, it follows that $\Phi^{+}(\Pi)$ is an RS-subgroup of $G$ and, consequently, that of $\Pi$, as desired.   If $A=B=G$, then $G$ is a \cut-group and $\Pi/G$ is infinite cyclic. Hence,  by (i)(a),  $\Pi$ is a \cut-group. 
\end{proof} 
The above theorem enables us to construct interesting examples of \cut-groups. \vspace{.3cm}\\
{\bf Examples}  \begin{description}
	\item[1.] $~$ While not every finite simple group is a \cut-group (\cite{AKS08}, \cite{Fer04}, also see \cite{MP18}, Theorem 2), observe  that if $G$ is an  infinite simple group, then clearly  $\Phi^{+}(G) = \{1\}$ and therefore it is a \cut-group. Furthermore,  as an immediate consequence of Theorem \ref{P2}(i)(a), it follows that \begin{quote} {\it An extension of an infinite simple group by  a \cut-group is a \cut -group.} \end{quote} 
\item[2. ] $~$ Recall that  $PSL(n, k)$ is simple if  $k$ is a field of characteristic $0$ and $n \geq 2$. If the roots of unity in $k$ are of exponent dividing $4$ or $6$ (e.g., if $k=\mathbb{Q}$ or $\mathbb{R}$), then  Theorem \ref{P2}(i)(b) yields that \begin{quote}
	{\it $SL(n, k)$ is a \cut-group for $ n \geq 2$.}  
\end{quote}
 \item[3.] $~$ It is known that the modular group $PSL(2, \mathbb{Z})$  is the  free  product of  cyclic groups $C_{2} $ and $ C_{3}$.  Thus, Theorem \ref{P2}(ii)  yields that 
\begin{quote} {\it The modular group $PSL(2, \mathbb{Z})$ is a \cut-group. } \end{quote}  
\item[4.]  $~$ Observe that   $SL(2, \mathbb{Z})$ is isomorphic to $ C_{4} \ast_{C_{2}} C_{6}$ and thus, by Theorem \ref{P2}(ii),  \begin{quote}
{\it 	$SL(2, \mathbb{Z})$ is a \cut-group. } \end{quote}
\item[5.]  $~$ The Baumslang Solitar group $BS(m, n):=\langle a, t~|~t^{-1}a^{m}t=a^n\rangle $,  where $m$ and $n$ are  non-zero integers,  is an  HNN extension. Thus, by Theorem \ref{P2}(iii)\begin{quote}
	{\it The Baumslag Solitar groups $BS(m, n)$ are  \cut-groups.}
\end{quote} 
 \end{description}

\para We now proceed to show that Theorem \ref{P2} enables us to classify infinite metacyclic \cut-groups. It may be mentioned that a complete list (up to isomorphism) of finite metacyclic \cut-groups has been computed in \cite{BMP17}. For  every  group $G$, $\Phi^+(\mathcal{Z}(G))\subseteq\Phi^+(G)$. If the equality holds, then we can see that $G$ is \cut-group if, and only if, $\mathcal{Z}(G)$ is a \cut-group, i.e., each central torsion element must have order dividing $ 4$ or $ 6$. This observation is helpful for the study  of infinite metacyclic \cut-groups. 		
\begin{theorem}\label{TM}An infinite non-abelian metacyclic group $G$ is a \cut-group if, and only if, it is isomorphic to one of the following groups:
	
	\begin{description}
		\item[(i)] $ \langle a,~b~|~b^{n}=1, ba=a^{-1}b \rangle,~ n \in  \{0, 2,4,6,8,12\}; $
		\item[(ii)]  $ \langle a,~b~|~a^{m} =1,ba= a^{r}b\rangle,~m\geq 3,~1\neq r\in U(m)$ and $U(m)=\langle -1, r\rangle.$
	\end{description}
\end{theorem}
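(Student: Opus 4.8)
The plan is to run everything through Theorem \ref{p5}: $G$ is a \cut-group precisely when $\Phi^{+}(G)$ is an RS-subgroup of $G$. Since $G$ is metacyclic, I fix a cyclic normal subgroup $\langle a\rangle\unlhd G$ with $G/\langle a\rangle$ cyclic, generated by the image of some element $b$. As $G$ is infinite, either $o(a)=\infty$, or $o(a)<\infty$ and $G/\langle a\rangle$ is infinite; these two cases will produce the families (i) and (ii). First I would establish the normal forms. If $o(a)=\infty$, the only non-trivial automorphism of $\langle a\rangle\cong\ZZ$ is inversion, which is forced by non-abelianness, so $bab^{-1}=a^{-1}$; writing $b^{n}=a^{s}$ and conjugating by $b$ gives $a^{2s}=1$, whence $s=0$ and $b^{n}=1$, with $n$ even or $0$ (for then $a=b^{n}ab^{-n}=a^{(-1)^{n}}$ forces $(-1)^{n}=1$). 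This is presentation (i). If $o(a)=m<\infty$, then infiniteness of $G/\langle a\rangle$ forces $\langle a\rangle\cap\langle b\rangle=1$, so $G=\langle a\rangle\rtimes\langle b\rangle\cong C_{m}\rtimes\ZZ$ with $bab^{-1}=a^{r}$, $r\in U(m)$, and non-abelianness gives $r\neq1$ and hence $m\geq3$; this is presentation (ii).

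Next I would determine $\Phi^{+}(G)$ in each case by a direct conjugacy analysis. In case (i), a short computation shows that the conjugates of $b^{j}a^{i}$ are $\{b^{j}a^{\pm i}\}$ when $j$ is even, but form the infinite set $\{b^{j}a^{\,i-2t}:t\in\ZZ\}$ when $j$ is odd; hence the only torsion FC-elements are the even powers of $b$, so that $\Phi^{+}(G)=T(\mathcal{Z}(G))=\langle b^{2}\rangle\cong C_{n/2}$. By the observation preceding the theorem (the equality $\Phi^{+}(\mathcal{Z}(G))=\Phi^{+}(G)$ now holds), $G$ is a \cut-group if and only if $\langle b^{2}\rangle$ is, i.e. iff every element of $C_{n/2}$ has order dividing $4$ or $6$, which holds exactly for $n/2\in\{1,2,3,4,6\}$, that is $n\in\{0,2,4,6,8,12\}$. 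In case (ii), every conjugacy class lies inside a single coset $b^{j}\langle a\rangle$ and is therefore finite, so $G$ is an FC-group with $\Phi^{+}(G)=\langle a\rangle$, and the conjugates of $a^{i}$ are exactly $\{a^{\,ir^{t}}:t\in\ZZ\}$.

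It then remains to pin down the RS-condition on $\langle a\rangle$ in case (ii). Testing the generator $a$ of order $m$, we have $a^{k}\sim_{G}a^{\pm1}$ for all $k\in U(m)$ iff every $k\in U(m)$ equals $\pm r^{t}$ for some $t$, i.e. iff $U(m)=\langle -1,r\rangle$. To upgrade this to every power $a^{i}$, I would use that for $d\mid m$ the reduction map $U(m)\to U(d)$ is surjective: writing $o(a^{i})=d$ and cancelling the common factor $\gcd(m,i)$ reduces the RS-condition for $a^{i}$ to the assertion that $\langle -1,r\rangle$ surjects onto $U(d)$, which is immediate once $U(m)=\langle -1,r\rangle$. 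Thus in case (ii) $G$ is a \cut-group iff $U(m)=\langle -1,r\rangle$, exactly the stated constraint. Assembling the two cases gives both implications. The main obstacle I anticipate is the accurate identification of $\Phi^{+}(G)$ through the conjugacy computations — in particular tracking parity in case (i) to confirm that $\Phi^{+}(G)$ coincides with the torsion of the centre — together with the descent from the generator to arbitrary powers in case (ii); normalizing the presentations and evaluating the resulting order conditions is then routine.
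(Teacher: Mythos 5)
Your proof is correct and follows essentially the same route as the paper: the same case division according to whether $o(a)$ is infinite or finite, the same identification $\Phi^{+}(G)=\Phi^{+}(\mathcal{Z}(G))=\langle b^{2}\rangle$ in the first case (invoking the observation preceding the theorem), and the same reduction to the condition $U(m)=\langle -1,r\rangle$ in the second. The only differences are matters of detail: you derive the normal forms and compute $\Phi^{+}(G)$ by direct conjugacy-class calculations, and you make explicit the descent from the RS-property of $a$ to that of its powers via surjectivity of $U(m)\to U(d)$, whereas the paper obtains these points by a finite-normal-subgroup computation, by citing Theorem \ref{P2}(i)(b), and by an ``easy to see'' remark.
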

\begin{proof} Let $G$ be an infinite metacyclic group and $N= \langle a \rangle $  a cyclic  normal subgroup of $G$ with $G/ N= \langle bN \rangle $ cyclic. Let $m= \operatorname{o}(a)$ and $n= \operatorname{o}(b)$. As $G$ is infinite, one of $m$ or $n$ must be $0$. 
	\par\vspace{.25cm}
	
	\underline{\textbf{Case I: $m=0$.}}\\
	 In this case, $$G\cong \langle a,~b~|~b^{n}=1,ba=a^{-1}b\rangle.$$ If $n=0$, then, by Theorem \ref{P2}(i), $G$ is a \cut-group. \para We thus assume that $ n \neq 0$. Observe that $\mathcal{Z}(G)= \langle b^{2} \rangle $. We assert  that $\Phi^+(G)= \Phi^+(\mathcal{Z}(G))= \langle b^{2} \rangle $.  Let $A$ be a finite normal subgroup of $G$ and let $g\in A$, so that $g=a^{\alpha}b^{\beta}$ for some $\alpha, \beta \in \mathbb{Z}$. Now, $g^{b}=a^{-\alpha}b^{\beta}=a^{-2\alpha}g\in A$ implies $\alpha=0$, as $A$ is finite. Furthermore, $g^{a}=a^{(-1)+(-1)^{\beta}}b^{\beta}g\in A$ implies $2\mid \beta$ and hence $A\subseteq \langle b^{2} \rangle$. Consequently, $\Phi^+(G) \subseteq  \langle b^{2} \rangle $; however, the reverse inclusion is obvious. Thus the assertion follows. Consequently,  by the foregoing observation,  $G$ is a \cut-group if, and only if, $n \in  \{2,4,6,8,12\}.$
	 	\par\vspace{.25cm}
	 
	 \underline{\textbf{Case II: $m \neq  0$.}}\\	
	 	In this case, $n=0$ and so 
	 $$G\cong \langle a,~b~|~a^{m} =1,ba= a^{r}b\rangle,~r\in U(m).$$  Since $G$ is non-abelian,  $m \geq 3$. By Theorem \ref{P2}(i)(b), $G$ is a \cut-group if, and only if, $a$ is an RS-element of $G$.  It is easy to see  that $a$ is an RS-element of $G$ if, and only if,   $U(m)=\langle -1, r\rangle$. 
\end{proof}

 \para We next  consider $p$-groups.
  
 \begin{theorem}\label{L2} A non-trivial normal subgroup $A$ of a finite $p$-group $G$ is an RS-subgroup of $G$ if, and only if, one of the following holds:
 	\begin{description}
 			\item[(i)] $p=2$ and $a^3\sim_{G} a^{\pm 1}$ for all $a \in G$;
 		\item[(ii)] $p=3$ and  $a^2\sim_{G} a^{-1}$ for all $a \in G$ .
 	
 	\end{description}
 	
 \end{theorem}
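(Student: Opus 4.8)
The plan is to route both implications through a single \emph{per-element} reduction: a torsion element $x$ of $2$-power order in any group is an RS-element if, and only if, $x^{3}\sim_{G}x^{\pm1}$, and a torsion element $x$ of $3$-power order is an RS-element if, and only if, $x^{2}\sim_{G}x^{-1}$. Granting this, the theorem assembles quickly: $A$ is an RS-subgroup exactly when every torsion element of $A$ is an RS-element, and this is governed entirely by the single-exponent relation once the prime $p$ has been pinned to $\{2,3\}$. The sufficiency direction then only needs the relation to hold on all torsion elements (which (i)/(ii) guarantee throughout $G$, hence on $A$), while necessity extracts the relation from the RS-property of the elements of $A$ by specializing $j=3$ (resp. $j=2$) in the defining condition.

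The engine is a subgroup argument in the unit group $U(m)=(\mathbb{Z}/m\mathbb{Z})^{\times}$. For $x$ of order $m$ I would first note that $H_{x}:=\{\,j\in U(m):x^{j}\sim_{G}x\,\}$ is a subgroup: if $x^{j_{1}}=g_{1}xg_{1}^{-1}$ and $x^{j_{2}}=g_{2}xg_{2}^{-1}$ then $x^{j_{1}j_{2}}=(g_{1}g_{2})x(g_{1}g_{2})^{-1}$, and a nonempty finite multiplicatively closed subset of a group is a subgroup. Since $x^{j}\sim_{G}x^{-1}$ is equivalent to $x^{-j}\sim_{G}x$, the set $S_{x}:=\{\,j:x^{j}\sim_{G}x^{\pm1}\,\}=H_{x}\cup(-1)H_{x}=\langle H_{x},-1\rangle$ is itself a subgroup of $U(m)$, and it always contains $-1$; thus $x$ is an RS-element precisely when $S_{x}=U(m)$. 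For $m=2^{k}$ I would invoke $(\mathbb{Z}/2^{k}\mathbb{Z})^{\times}=\langle -1\rangle\times\langle 3\rangle$ for $k\ge 3$ (the cases $k\le 2$ are vacuous, as then $U(2^{k})\subseteq\{1,3\}$ and $3\equiv -1$), so that $x^{3}\sim_{G}x^{\pm1}$, i.e.\ $3\in S_{x}$, together with $-1\in S_{x}$ forces $S_{x}=U(2^{k})$. For $m=3^{k}$ I would use that $2$ is a primitive root modulo $3^{k}$, whence $x^{2}\sim_{G}x^{-1}$, i.e.\ $2\in S_{x}$, gives $S_{x}\supseteq\langle 2\rangle=U(3^{k})$. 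The converse halves are immediate; in the $3$-power case one upgrades $x^{2}\sim_{G}x^{\pm1}$ to $x^{2}\sim_{G}x^{-1}$ using $2^{3^{k-1}}\equiv -1\pmod{3^{k}}$, so that $x^{2}\sim_{G}x$ yields $x^{-1}=x^{\,2^{3^{k-1}}}\sim_{G}x$ and hence again $x^{2}\sim_{G}x^{-1}$.

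It remains to locate $p$, which forces the dichotomy between (i) and (ii). Here I would use the standard fact that a non-trivial normal subgroup of a finite $p$-group meets the centre non-trivially (count the $G$-conjugacy classes contained in $A$ modulo $p$: their sizes are powers of $p$, the class of $1$ is a fixed point, and $|A|\equiv 0\pmod p$, so $|A\cap\mathcal Z(G)|\equiv 0\pmod p$ is at least $p$). Pick $z\in A\cap\mathcal Z(G)$ of order $p$. If $A$ is an RS-subgroup then $z$ is an RS-element, and being central, $z^{j}\sim_{G}z^{\pm1}$ means $z^{j}=z^{\pm1}$, i.e.\ $j\equiv\pm1\pmod p$ for every $j\in U(p)=\{1,\dots,p-1\}$; this forces $p-1\le 2$, so $p\in\{2,3\}$. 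Combining with the per-element reduction gives the stated equivalence, with alternative (i) occurring for $p=2$ and (ii) for $p=3$.

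I expect the main obstacle to be the sufficiency (converse) direction of the per-element reduction: one cannot argue residue-by-residue, because $x^{3}\sim_{G}x^{\pm1}$ supplies information only about the single exponent $3$ (resp.\ $2$). The crux is to package all conjugacy relations into the subgroup $S_{x}$ and then exploit the \emph{exact} generation of $(\mathbb{Z}/2^{k}\mathbb{Z})^{\times}$ by $-1$ and $3$ (resp.\ of $(\mathbb{Z}/3^{k}\mathbb{Z})^{\times}$ by the primitive root $2$) to conclude $S_{x}=U(m)$. By contrast, the centre-meeting argument eliminating $p\ge 5$, and the forward extraction of the single relation from the RS-property, are routine.
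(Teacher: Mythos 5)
Your proof is correct, and its skeleton coincides with the paper's: you pin $p\in\{2,3\}$ by intersecting $A$ with the centre (the paper does exactly this, noting that a central element is an RS-element only if its order divides $4$ or $6$), you get necessity by specializing $j=3$ (resp.\ $j=2$), and you get sufficiency from $U(2^{k})=\langle -1\rangle\times\langle 3\rangle$ and the fact that $2$ is a primitive root modulo $3^{k}$ --- the same facts the paper invokes, with your subgroup $S_{x}=\{j\in U(o(x)):x^{j}\sim_{G}x^{\pm 1}\}$ merely making explicit the bookkeeping the paper compresses into ``which implies''. The one genuinely different step is the $p=3$ necessity, where $a^{2}\sim_{G}a^{\pm 1}$ must be upgraded to $a^{2}\sim_{G}a^{-1}$. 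The paper does this by proving that $a^{2}\sim_{G}a$ is \emph{impossible} for $1\neq a$ in a finite $p$-group: choosing $i$ with $a\in\gamma_{i}(G)\setminus\gamma_{i+1}(G)$, the relation $a^{2}=g^{-1}ag$ gives $a=[a,g]\in[\gamma_{i}(G),G]=\gamma_{i+1}(G)$, a contradiction. You instead observe that if $a^{2}\sim_{G}a$ then $H_{a}=\{j:a^{j}\sim_{G}a\}$ contains $\langle 2\rangle=U(3^{k})\ni -1$ (since $2^{3^{k-1}}\equiv -1 \pmod{3^{k}}$), so $a\sim_{G}a^{-1}$ and hence $a^{2}\sim_{G}a^{-1}$ anyway. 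Your version is more elementary and works in an arbitrary group, with no appeal to nilpotency; the paper's version yields the stronger and independently useful fact that $a^{2}\not\sim_{G}a$ in a finite $p$-group (this is also what is reused in the proof of Theorem \ref{p3}). Both close the gap equally well. One remark on the statement itself: like the paper's own proof, your necessity direction establishes (i)/(ii) only for $a\in A$, not for all $a\in G$ as literally written; the literal reading is in fact false (take $G=C_{8}\times C_{2}$ and $A$ the direct factor $C_{2}$: then $A$ is an RS-subgroup, but a generator $a$ of the $C_{8}$ factor satisfies $a^{3}\not\sim_{G}a^{\pm 1}$), so ``$a\in G$'' must be read as ``$a\in A$'', and with that reading your proof, like the paper's, is complete.
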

 
 \begin{proof}
 	A non-trivial normal subgroup $A$ of $G$ must  intersect $\mathcal Z(G)$ non-trivially, as $G$ is a $p$-group. However, a central element of $G$  is an RS-element  if, and only if, its order divides  $4$ or $6$.  This gives that $ p = 2$ or $3$. 
 	\par\vspace{.25cm}
 	 	 Let $1\neq a\in A$. If $p=2$, then $a^{3}\sim_{G}a^{\pm1}$, as $3\in  U(o(a))$ and for the same reason, if $p=3$, then $a^{2}\sim_{G}a^{\pm1}$. In the latter case, we just need to check that $a^{2}\not\sim_{G}a$. Consider the lower central series $\{\gamma_{i}(G)\}_{\geq 0}$  of $G$.  Since $G$ is a $p$-group, there exists $n \geq 1$ such that $\gamma_{n}(G)=\{1\} $. Hence we can  find  $1 \leq i <n$  such that $a\in \gamma_{i}(G)\setminus\gamma_{i+1}(G)$. If $a^{2}\sim_{G}a$, then $a^{2}= g^{-1}ag$  for some $g\in G$, which implies $a=[a,\,g]\in [\gamma_{i}(G), G]=\gamma_{i+1}(G)$, contradicting the choice of $i$.
 	\par\vspace{.25cm}
 	Conversely, let $G$ be a finite $p$-group, $p\in \{2,\,3\}$, and $A$ a normal subgroup of $G$ satisfying (i) or (ii) according as $p=2$ or $3$. 
 	\par\vspace{.25cm}
 	Let $a\in A$. 
 
 	\par\vspace{.25cm}
 	 If  $p=2$, then $U(o(a))=\langle 3\rangle \times \langle -1\rangle $ and hence $a^{3}\sim_{G}a^{\pm 1}$ which implies $ a^{j}\sim_{G}a^{\pm 1}$ for all $j \in U(o(a))$.
 	 	\par\vspace{.25cm}
 	 If $p=3$, then $2$ is a primitive root modulo $o(a)$ and hence $a^{2}\sim_{G}a^{-1}$ implies that $ a^{j}\sim_{G}a^{-1}$ for all $ j \in U(o(a))$.
 	 \par\vspace{.25cm}
 	  Therefore, in either case, $a$ is an RS-element of $G$.  \end{proof} 
 The above result yields information about the RS-subgroups of a finite nilpotent group. Given a group $G$, let $\pi(G)$ denote the set of primes $p$ for which $G$ contains an element of order $p$.
 \begin{cor}\label{L1} Let $A$ be a normal RS-subgroup of a finite nilpotent group $G$, then $\pi(A)\subseteq\{2,3\}$.
 \end{cor}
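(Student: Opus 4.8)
The plan is to exploit the primary decomposition of a finite nilpotent group together with Theorem \ref{L2}, which already determines exactly when a $p$-group admits a non-trivial normal RS-subgroup. Since $G$ is finite and nilpotent, it is the direct product of its Sylow subgroups, $G = P_1 \times \cdots \times P_k$ with $P_i \in \Syl_{p_i}(G)$; in particular each $P_i$ is the \emph{unique} Sylow $p_i$-subgroup, is normal in $G$, and elements lying in distinct factors commute. I will fix an arbitrary prime $p \in \pi(A)$, let $P$ be the corresponding Sylow $p$-subgroup, and show $p \in \{2,3\}$.

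First I would locate a non-trivial normal RS-subgroup inside $P$. By the definition of $\pi(A)$, the subgroup $A$ contains an element $a$ of order $p$; being of $p$-power order, $a$ lies in $P$, so $A \cap P \neq \{1\}$. As $A \cap P$ is the intersection of the two normal subgroups $A$ and $P$ of $G$, it is a non-trivial normal subgroup of $G$, and hence of $P$.

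The crucial step is to verify that $A \cap P$ is an RS-subgroup of $P$, not merely that its elements are RS-elements of the larger group $G$. Take $x \in A \cap P$. Since $x \in A$ and $A$ is an RS-subgroup of $G$, for each $j \in U(o(x))$ there exists $g \in G$ with $g^{-1}x^{j}g = x^{\pm 1}$. Writing $g = g_1 \cdots g_k$ with $g_i \in P_i$ and using that $x \in P$ commutes with every factor $g_i$ for $i$ with $P_i \neq P$, conjugation of $x$ (and of $x^j$) by $g$ collapses to conjugation by the single $P$-component $g_p \in P$; thus $g_p^{-1}x^{j}g_p = x^{\pm 1}$, giving $x^{j} \sim_{P} x^{\pm 1}$. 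Hence every element of $A \cap P$ is an RS-element of $P$, so $A \cap P$ is a non-trivial normal RS-subgroup of the $p$-group $P$, and Theorem \ref{L2} forces $p \in \{2,3\}$. As $p \in \pi(A)$ was arbitrary, $\pi(A) \subseteq \{2,3\}$.

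The only genuinely delicate point is this descent of the RS-property to the Sylow component; the rest is the routine fact that conjugacy in a direct product is governed factorwise. (Alternatively, one could invoke the first paragraph of the proof of Theorem \ref{L2} directly, noting that $A \cap P$ meets $\mathcal{Z}(P)$ non-trivially and that a central RS-element has order dividing $4$ or $6$, which again isolates $p \in \{2,3\}$.)
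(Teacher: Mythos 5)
Your proof is correct and takes essentially the same approach as the paper: the paper's one-line argument simply observes that for each relevant prime $p$ the Sylow $p$-subgroup of $A$ (your $A \cap P$) is an RS-subgroup of the Sylow $p$-subgroup of $G$, and then invokes Theorem \ref{L2}. Your factorwise-conjugation argument via the direct-product decomposition is precisely the detail behind the paper's ``observe,'' so you have filled in, rather than deviated from, the intended proof.
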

 \begin{proof} Observe that  for every prime $p\in \pi(G)$,   the  Sylow $p$-subgroup of $A$ is an RS-subgroup of the Sylow $p$-subgroup of $G$. Thus the assertion follows from Theorem   \ref{L2}.\end{proof}

In view of  Lemma \ref{e0},  Theorems \ref{p5} \& \ref{L2},   we obtain the following:

\begin{theorem}\label{p2}  A $p$-group $G$ is a \cut-group if, and only if, one of the following holds:
	
	\begin{description}
			\item[(i)] $p=2$ and $a^3\sim_{G} a^{\pm 1}$ for all $a\in \Phi^+(G)$ ;
		\item[(ii)] $p=3$ and $a^2\sim_{G} a^{-1}$ for all $a\in \Phi^+(G)$ ;
		\
		\item[(iii)] $\Phi^+(G)=\{1\}$.
	\end{description}
	
\end{theorem}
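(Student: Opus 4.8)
The plan is to deduce everything from Theorem~\ref{p5}, which tells us that $G$ is a \cut-group if, and only if, $\Phi^{+}(G)$ is an RS-subgroup of $G$, and then to transport the RS-condition from the (possibly infinite) group $G$ down to finite $p$-groups, where Theorem~\ref{L2} applies. Since $G$ is a $p$-group every element is torsion, so $\Phi^{+}(G)=\Phi(G)$, and every element of $\Phi^{+}(G)$ has finite order and only finitely many conjugates in $G$. Thus $\Phi^{+}(G)$ being an RS-subgroup of $G$ is the same as each of its elements being an RS-element of $G$.

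First I would localise the problem to finite normal subgroups. If $a\in\Phi^{+}(G)$, then its conjugacy class $a^{G}$ is a finite normal subset consisting of torsion elements, so by Dietzmann's lemma the normal closure $N:=\langle a^{G}\rangle$ is a \emph{finite} normal subgroup of $G$, necessarily a $p$-group. Hence every element of $\Phi^{+}(G)$ lies in some finite normal $p$-subgroup of $G$, while conversely every finite normal subgroup is contained in $\Phi^{+}(G)$. Consequently $\Phi^{+}(G)$ is an RS-subgroup of $G$ if, and only if, every finite normal $p$-subgroup $N$ of $G$ is an RS-subgroup of $G$. In particular, if $G$ has no non-trivial finite normal subgroup then $\Phi^{+}(G)=\{1\}$, which is alternative (iii), and such a $G$ is then a \cut-group by Theorem~\ref{p5}.

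So assume $\Phi^{+}(G)\neq\{1\}$ and fix a non-trivial finite normal $p$-subgroup $N$. The remaining task is to phrase ``$N$ is an RS-subgroup of $G$'' in terms of a finite $p$-group, and here I would invoke Lemma~\ref{e0}: applied to the finite normal subgroup $N$ it produces a finite extension $H$ of $N$ with $\mathcal{Z}(\mathcal{U}(\mathbb{Z}[G]))\cap\mathbb{Z}[N]=\mathcal{Z}(\mathcal{U}(\mathbb{Z}[H]))\cap\mathbb{Z}[N]$. The conjugation action of $G$ on $N$ factors through the finite $p$-group $G/C_{G}(N)$, so one may take $H=N\rtimes G/C_{G}(N)$, which is again a \emph{finite $p$-group} in which the $G$-conjugacy relations among the powers of elements of $N$ are faithfully reproduced (the inner automorphisms of $N$ already lie in the image of $G$). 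Feeding the displayed equality of central units into the finite-group dictionary between RS-subgroups and trivial central units (\cite{DMS05}, Theorems 8 \& 9) for $H$, I obtain that $N$ is an RS-subgroup of $G$ if, and only if, $N$ is an RS-subgroup of the finite $p$-group $H$.

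Finally I would apply Theorem~\ref{L2} to the finite $p$-group $H$ and its non-trivial normal subgroup $N$. This forces $p\in\{2,3\}$ and says that $N$ is an RS-subgroup of $H$ precisely when $a^{3}\sim_{H}a^{\pm1}$ for all $a\in N$ (if $p=2$) or $a^{2}\sim_{H}a^{-1}$ for all $a\in N$ (if $p=3$); since the conjugacy of the powers of any element of $N$ is the same in $H$ as in $G$, these translate into the conjugacy conditions of (i), respectively (ii), for the elements of $N$. Letting $N$ range over all finite normal $p$-subgroups of $G$ — equivalently, over all elements of $\Phi^{+}(G)$ — yields the stated conditions for all $a\in\Phi^{+}(G)$, and the converse implications follow by reversing the same chain. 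The main obstacle is precisely this middle reduction: one must ensure that the finite group supplied by Lemma~\ref{e0} can be taken to be a $p$-group reproducing the relevant $G$-conjugacies, since only then is Theorem~\ref{L2} applicable; the choice $H=N\rtimes G/C_{G}(N)$ is what secures this.
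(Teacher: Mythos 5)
Your proof is correct and takes essentially the same route as the paper, whose entire proof is the one-line citation of Lemma \ref{e0} and Theorems \ref{p5} and \ref{L2}; your write-up simply supplies the glue those citations leave implicit (Dietzmann's lemma to exhibit $\Phi^{+}(G)$ as the union of finite normal $p$-subgroups, the explicit finite $p$-group $H=N\rtimes G/C_{G}(N)$, and the fact that $H$-conjugacy agrees with $G$-conjugacy on $N$). Note only that your conjugacy-preservation observation already gives ``$N$ is an RS-subgroup of $G$ if and only if it is one of $H$'' outright, so the detour through Lemma \ref{e0} and the central-unit dictionary of \cite{DMS05} (which, as quoted in the paper, applies only to finite groups and hence cannot be invoked on the $G$-side) is dispensable.
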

\par\vspace{.25cm}
	 It may  be noted that, while a finite $p$-group, which is a  \cut-group, must necessarily be a 2-group or a 3-group, this is not the case for  an  infinite $p$-group to be a \cut-group.  For example, for any prime $p$, the wreath product $ C_{p} \wr A$, where $A$ is a direct sum of infinitely many copies of $C_{p}$, is a \cut-group, since  $\Phi( C_{p} \wr A)=\{1\}$. For more general examples of groups with  $\Phi^{+}(G) = \{1\}$, see  \cite{Pre13}-\cite{Pre06}. 

\par\vspace{.25cm} Let $G_p$ denote the subset of $G$ consisting of $p$-elements in $G$.  
For  nilpotent  \cut-groups, we have 

\begin{theorem}\label{p3} A nilpotent group $G$ is a \cut-group if, and only if, either\linebreak  $\Phi^+(G)=\{1\}$ or $\Phi^+(G)$ is a $\{2,\,3\}$-group and the following conditions hold:
	
	\begin{description}
		
		\item[(i)] for all $a\in \Phi^+(G)_2$, $a^3\sim_{G} a^{\pm1}$;
		\item[(ii)] for all $a\in \Phi^+(G)_3$, $a^2\sim_{G} a^{-1}$. 
	\end{description}	
\end{theorem}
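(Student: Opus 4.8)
The plan is to run everything through Theorem \ref{p5}, which reduces ``cut'' to the assertion that $\Phi^{+}(G)$ is an RS-subgroup of $G$, i.e. that every torsion element of the FC-center is an RS-element. The nilpotency enters through the structure of $\Phi^{+}(G)$: being the torsion subgroup of the nilpotent group $\Phi(G)$, it is the internal direct product $\prod_{p}\Phi^{+}(G)_{p}$ of its $p$-components, with distinct components commuting elementwise. Hence every $a\in\Phi^{+}(G)$ factors uniquely as $a=\prod_{p}a_{p}$ with $a_{p}\in\Phi^{+}(G)_{p}$ of pairwise coprime orders, and since conjugation is an automorphism it carries the $p$-part of $a$ to the $p$-part of its image. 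The argument is organised around transferring the RS-condition on $a$ to conditions on the $a_{p}$; in particular $a^{j}\sim_{G}a^{\pm1}$ forces $a_{p}^{\,j}\sim_{G}a_{p}^{\pm1}$ for each $p$, read off from the $p$-parts.

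For the forward direction, assume $G$ is a cut-group, so by Theorem \ref{p5} every element of $\Phi^{+}(G)$ is an RS-element. To see that $\Phi^{+}(G)$ is a $\{2,3\}$-group, suppose some prime $p\ge5$ occurs and pick $1\neq x\in\Phi^{+}(G)_{p}$. Since $x$ is an FC-element its normal closure $\langle x\rangle^{G}$ is finitely generated and torsion, hence a finite normal $p$-subgroup $N$ of $G$; as a non-trivial normal subgroup of a nilpotent group meets the centre, there is a central $p$-element $c\in N\subseteq\Phi^{+}(G)_{p}$, which is therefore an RS-element. But a central element is conjugate in $G$ only to itself, so its being RS forces $U(o(c))=\{\pm1\}$, i.e. $o(c)\in\{2,3,4,6\}$, incompatible with $o(c)$ a power of $p\ge5$. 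Thus $\pi(\Phi^{+}(G))\subseteq\{2,3\}$. Conditions (i) and (ii) then follow from the RS-property of the $2$- and $3$-elements: for a $2$-element $3\in U(o(a))$ gives $a^{3}\sim_{G}a^{\pm1}$, while for a $3$-element $a^{2}\sim_{G}a^{\pm1}$, and the sign in (ii) is pinned to $-1$ by the lower central series exactly as in Theorem \ref{L2}, since $a^{2}\sim_{G}a$ would yield $a=[a,g]\in\gamma_{i+1}(G)$ for $a\in\gamma_{i}(G)\setminus\gamma_{i+1}(G)$, a contradiction.

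For the converse I would attach to each torsion $a$ the set $T_{a}=\{\,j\in U(o(a)):a^{j}\sim_{G}a\,\}$, which is a subgroup of $U(o(a))$ by compatibility of $\sim_{G}$ with taking powers, so that $a$ is an RS-element precisely when $U(o(a))=T_{a}\cup(-1)T_{a}$. Granting (i) and (ii), the single-prime statements are routine: for a $2$-element $U(2^{k})=\langle 3\rangle\times\langle -1\rangle$, and $a^{3}\sim_{G}a^{\pm1}$ places $3$ or $-3$ in $T_{a}$, whence $T_{a}\cup(-1)T_{a}=U(2^{k})$; for a $3$-element $2$ is a primitive root modulo $3^{l}$ and $a^{2}\sim_{G}a^{-1}$ places $-2$ in $T_{a}$, again filling out $U(3^{l})$. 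Thus every $2$-element and every $3$-element of $\Phi^{+}(G)$ is an RS-element, and the case $\Phi^{+}(G)=\{1\}$ is immediate from Theorem \ref{p5}.

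The hard part, and the step I expect to be the real obstacle, is passing from the RS-property of the coprime commuting factors $a_{2}$ and $a_{3}$ to that of the mixed element $a=a_{2}a_{3}$. Because conjugation respects the $\{2,3\}$-decomposition, $a^{j}\sim_{G}a^{\pm1}$ amounts to realizing $a_{2}^{\,j}\sim_{G}a_{2}^{\varepsilon}$ and $a_{3}^{\,j}\sim_{G}a_{3}^{\varepsilon}$ with a \emph{common} conjugating element and, crucially, a \emph{common} sign $\varepsilon$; the conjugacies furnished by (i) and (ii) live over the two primes independently, so neither the conjugator nor the sign is automatically shared. To control this I would pass to the finite normal closure $N=\langle a\rangle^{G}=N_{2}\times N_{3}$ and the finite group $\bar G=G/C_{G}(N)$ acting on $N$, reducing the question to conjugacy in a finite setting where Theorem \ref{L2}, Corollary \ref{L1} and Lemma \ref{e0} are available, and then use nilpotency to constrain the $3$-part (the lower central series argument limits which powers of a $3$-element are self-conjugate), thereby forcing the signs over the two primes to be reconcilable. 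This simultaneous control of conjugator and sign is where the argument is most delicate, and where I would concentrate the detailed work.
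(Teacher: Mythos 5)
Your forward implication is sound, and it is essentially the paper's: the paper routes the $\{2,3\}$-restriction through the finite nilpotent group $H:=A\rtimes G/C_G(A)$ and Corollary \ref{L1}, while you go through the finite normal closure and a central element, and you pin the sign in (ii) by the same lower-central-series argument as Theorem \ref{L2}. Moreover, one half of the obstacle you flag in the converse can in fact be dispatched: if $N=\langle a\rangle^{G}=N_2\times N_3$, then in a nilpotent group (finite or not) coprime action forces $G/C_G(N_2)$ to be a finite $2$-group and $G/C_G(N_3)$ a finite $3$-group (for $\bar g$ of odd prime order acting on $N_2$ one gets $[N_2,g]=[N_2,g,g]=\cdots=1$ by nilpotency), so the image of $G$ in $\Aut(N_2)\times\Aut(N_3)$ splits as $P_2\times P_3$ with $P_2$ acting trivially on $N_3$ and $P_3$ trivially on $N_2$; hence conjugators for the $2$-part and the $3$-part can always be merged into one.

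The other half of your worry --- the common \emph{sign} --- is, however, a genuine gap in your proposal, and it cannot be closed, because the ``if'' direction of the theorem is false as stated. Take $G=C_{12}=\langle z\rangle\cong C_4\times C_3$: it is abelian (hence nilpotent), $\Phi^+(G)=G$ is a $\{2,3\}$-group, and (i), (ii) hold trivially, since every element of $\Phi^+(G)_2$ has order dividing $4$ (so $a^3=a^{\pm1}$) and every element of $\Phi^+(G)_3$ has order dividing $3$ (so $a^2=a^{-1}$); yet $z^{7}=z_2^{-1}z_3\notin\{z,z^{-1}\}$ and conjugacy in $G$ is equality, so $z$ is not an RS-element and $G$ is not a \cut-group by Theorem \ref{p5}. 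The mechanism is exactly the obstruction you identified: a nontrivial $3$-element of a nilpotent group is never conjugate to its inverse (your lower-central-series argument), so the $3$-part dictates the sign $\epsilon\equiv k \pmod 3$, and the $2$-part must then be conjugate to \emph{both} $a_2$ and $a_2^{-1}$, which condition (i) does not provide. Note that the paper's own proof of the converse (``can be seen easily as $U(2^i3^j)=U(2^i)\times U(3^j)$\dots'') overlooks precisely this point, so your instinct that this is ``the real obstacle'' is vindicated --- but it is not a missing lemma, it is a counterexample. The correct statement must, whenever both $\Phi^+(G)_2$ and $\Phi^+(G)_3$ are nontrivial, strengthen (i) to $a^3\sim_G a\sim_G a^{-1}$ for all $a\in\Phi^+(G)_2$: this is necessary (apply the RS-property of $a_2a_3$ to an exponent $\equiv 1 \bmod o(a_2)$, $\equiv -1\bmod o(a_3)$, and use that the $3$-part excludes the sign $+1$), and it is sufficient, since then your $T_a$-subgroup argument makes the $2$-part conjugate to both signs, the $3$-part selects the sign, and the $P_2\times P_3$ splitting above merges the conjugators.
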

\begin{proof} 
Let $G$ be a nilpotent group with  $\Phi^+(G) \neq \{1\}$. Let $x \in \Phi^+(G)$ be an element of prime order (say $p$) and $A$ a finite normal subgroup of $G$ containing $x$. Since $G$ is a \cut-group, $A$ is an RS-subgroup of $G$. Consider $H:= A\rtimes G/C_{G}(A)$. Observe that $A$ is an RS-subgroup of the finite nilpotent group $H$.  Thus, by Corollary \ref{L1}, $\pi(A) \subseteq \{2, 3\}$, which gives that $p=2$~or~$3$. Furthermore, as in Theorem \ref{L2},  it turns out  that $a^3\sim_{G} a^{\pm 1}$  if $p=2$,  and $a^2\sim_{G} a^{-1}$ if $p=3$.  The converse can be seen easily as $U(2^i)=   \langle 3\rangle \times \langle -1\rangle $, $U(3^j)=  \langle 2\rangle $  and $U(2^i3^j)= U(2^i) \times U(3^j)$ for all $i, j \geq 1$. 
\end{proof}

\begin{theorem}\label{L0} Given a normal subgroup $A$ of a finite group $G$, the following statements are equivalent:
	\begin{description}\item[(i)] $ \mathcal Z(\mathcal U(\mathbb Z[G]))\cap   (1+ \Delta(G)\Delta(A)) $ is trivial.
		\item[(ii)] $\rho(G)=\rho(G/A),$  where $\rho(G)$ denotes the rank of  $\mathcal Z(\mathcal U(\mathbb Z[G]))$. 
	
	\end{description}
	 \end{theorem}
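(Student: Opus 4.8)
The plan is to route both conditions through the natural ring epimorphism $\pi\colon\mathbb{Z}[G]\to\mathbb{Z}[G/A]$ and to express each as a statement about the rank of an explicit finitely generated abelian group. Write $V_A:=\mathcal{Z}(\mathcal{U}(\mathbb{Z}[G]))\cap(1+\Delta(G)\Delta(A))$ for the group in (i). The first thing I would stress is the reading of the word ``trivial'': since the torsion subgroup of $\mathcal{Z}(\mathcal{U}(\mathbb{Z}[G]))$ is exactly $\pm\mathcal{Z}(G)$ (every torsion central unit is trivial), ``$V_A$ is trivial'' means that $V_A$ consists of trivial units, equivalently that $V_A$ is finite, i.e. of $\mathbb{Z}$-rank $0$. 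Thus the whole theorem will follow from the single rank identity
\[
\operatorname{rank}(V_A)=\rho(G)-\rho(G/A),
\]
together with the (automatic) inequality $\rho(G)\ge\rho(G/A)$: given these, $V_A$ has rank $0$ precisely when $\rho(G)=\rho(G/A)$.

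Since $\pi$ is a ring homomorphism it sends central units to central units, inducing $\pi_*\colon\mathcal{Z}(\mathcal{U}(\mathbb{Z}[G]))\to\mathcal{Z}(\mathcal{U}(\mathbb{Z}[G/A]))$ with $\ker\pi_*=\{u:u-1\in\ker\pi\}$, where $\ker\pi=\mathbb{Z}[G]\Delta(A)$. As $\ker\pi\subseteq\Delta(G)$ one checks $\Delta(G)\cdot\ker\pi=\Delta(G)\Delta(A)$, so $V_A\subseteq\ker\pi_*$; the crucial first step is that these two groups have equal rank. For this I would use the map $u\mapsto(u-1)+\Delta(G)\Delta(A)$ from $\ker\pi_*$ to $\ker\pi/\Delta(G)\Delta(A)$. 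It is a homomorphism because for $u,v\in\ker\pi_*$ the term $(u-1)(v-1)$ lies in $(\ker\pi)^2\subseteq\Delta(G)\cdot\ker\pi=\Delta(G)\Delta(A)$, and its kernel is exactly $V_A$. The point is that its target is finite: the $\mathbb{Q}$-span of $\ker\pi$ is the ideal $(1-\widehat{A})\mathbb{Q}[G]$, where $\widehat{A}=|A|^{-1}\sum_{a\in A}a$, and since $1-\widehat{A}$ has augmentation zero and is the identity of this ideal, the $\mathbb{Q}$-span of $\Delta(G)\Delta(A)=\Delta(G)\cdot\ker\pi$ is again all of $(1-\widehat{A})\mathbb{Q}[G]$. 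Hence $\ker\pi/\Delta(G)\Delta(A)$ is torsion, so finite, and $V_A$ has finite index in $\ker\pi_*$.

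Next I would compute $\operatorname{rank}(\ker\pi_*)$ from the Wedderburn--Dirichlet picture. The central idempotent $\widehat{A}$ gives $\mathbb{Q}[G]\cong\widehat{A}\,\mathbb{Q}[G]\times(1-\widehat{A})\mathbb{Q}[G]$, and $\pi$ identifies $\widehat{A}\,\mathbb{Q}[G]$ with $\mathbb{Q}[G/A]$; on centres this reads $\mathcal{Z}(\mathbb{Q}[G])\cong F_1\times F_2$ with $F_1=\mathcal{Z}(\mathbb{Q}[G/A])$, each $F_i$ a product of number fields. Now $\mathcal{Z}(\mathcal{U}(\mathbb{Z}[G]))=\mathcal{U}(\mathcal{O})$ for the order $\mathcal{O}=\mathcal{Z}(\mathbb{Z}[G])$ in $F_1\times F_2$, so by Dirichlet's unit theorem its rank is additive over the factors: $\rho(G)=\rho(G/A)+\rho_2$ with $\rho_2=\operatorname{rank}\mathcal{U}(\mathcal{O}_{F_2})$ (in particular $\rho(G)\ge\rho(G/A)$). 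Under this identification $\pi_*$ is the projection onto the $F_1$-factor; its image lies in the rank-$\rho(G/A)$ group of units of an order in $F_1$ and its kernel embeds in the rank-$\rho_2$ group of units of an order in $F_2$. Comparing ranks with $\operatorname{rank}\mathcal{U}(\mathcal{O})=\rho(G/A)+\rho_2$ forces both bounds to be equalities, whence $\operatorname{rank}(\ker\pi_*)=\rho_2=\rho(G)-\rho(G/A)$.

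Putting the two steps together yields $\operatorname{rank}(V_A)=\operatorname{rank}(\ker\pi_*)=\rho(G)-\rho(G/A)$, and the equivalence (i)$\Leftrightarrow$(ii) drops out. The step I expect to be the main obstacle is the first one: correctly measuring the gap between ``$u\equiv1$ modulo $\ker\pi$'' and the stronger ``$u\equiv1$ modulo $\Delta(G)\Delta(A)$'', and showing this gap is only a finite group. This is also where the reading of ``trivial'' matters, for $V_A$ can genuinely contain nontrivial \emph{torsion} units (e.g. a central $z\in[A,G]$), so (i) cannot be read as $V_A=\{1\}$; it is exactly the vanishing of the free part of $V_A$ that mirrors the rank equality in (ii).
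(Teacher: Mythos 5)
Your proof is correct, and at bottom it uses the same machinery as the paper: the central idempotent $\widehat{A}$ splitting $\mathbb{Q}[G]$ into $\mathbb{Q}[G/A]\times(1-\widehat{A})\mathbb{Q}[G]$, Dirichlet's unit theorem for the orders $\mathcal{Z}(\mathbb{Z}[G])$ and $\mathcal{Z}(\mathbb{Z}[G/A])$ giving $\rho(G)=\rho(G/A)+\rho_2$, and the finiteness of the gap between $1+\ker\pi$ and $1+\Delta(G)\Delta(A)$. But your organization differs in two ways worth noting. First, where you prove finiteness of $\ker\pi/\Delta(G)\Delta(A)$ by the $\mathbb{Q}$-span argument, the paper uses the group-theoretic description of this quotient: from $u\in 1+\Delta(G,A)$ it extracts $u\equiv a \bmod \Delta(G)\Delta(A)$ for some $a\in A$ and then raises to the power $o(a)$ (legitimate because $u$ is central and $G$ is finite) --- the same finiteness in different clothing. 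Second, and more substantively, the paper argues the two implications separately in the language of ``linearly independent central units map to linearly independent central units'': direction (i)$\Rightarrow$(ii) is carried out in detail, but in the converse direction the paper simply asserts that the rank equality implies preservation of independence, i.e.\ that $\ker\pi_*$ has rank zero, without justification. Your embedding of $\ker\pi_*$ into the unit group of the order $(1-\widehat{A})\mathcal{Z}(\mathbb{Z}[G])$ in $F_2$, which yields $\operatorname{rank}(\ker\pi_*)\le\rho_2=\rho(G)-\rho(G/A)$, is precisely the ingredient needed to make that assertion rigorous; packaged as the single identity $\operatorname{rank}(V_A)=\rho(G)-\rho(G/A)$, it delivers both directions at once. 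So your write-up is not only correct but is tighter than the paper's in the (ii)$\Rightarrow$(i) direction. Two minor points: equating ``rank $0$'' with ``consists of trivial units'' silently invokes the Berman--Higman theorem (torsion central units are trivial), which the paper also uses tacitly (``$u$ can't have infinite order\dots consequently $u$ is a trivial central unit''); and your reading of ``trivial'' as ``only trivial units'' is indeed the paper's, as its own proof concludes with exactly that, so your cautionary example of a central element of $[A,G]$ is apt, since such an element lies in $1+\Delta(G)\Delta(A)$ by the standard isomorphism $\Delta(G,A)/\Delta(G)\Delta(A)\cong A/[A,G]$.
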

\begin{proof}
 Suppose $(i)$ holds. Let $$\mathbb{Q}[G] \cong \oplus_{1\leq i \leq m} M_{n_{i}}(\mathbb{D}_{i})$$ be the Wedderburn decomposition of $\mathbb{Q}[G]$. By reordering, if necessary, we can assume that $$\mathbb{Q}[G] (1-\hat{A})\cong \oplus_{1\leq i < r} M_{n_{i}}(\mathbb{D}_{i})$$ and $$\mathbb{Q}[G/A] \cong \mathbb{Q}[G] \hat{A} \cong \oplus_{r \leq i \leq m} M_{n_{i}}(\mathbb{D}_{i}),$$ where $1 \leq r < m$. The center $\mathcal{Z}(\mathbb{D}_{i})$ of each division ring $\mathbb{D}_{i}$ is a finite extension of $\mathbb{Q}$.  Let $\mathcal{O}_{i}$ be the ring of integers of $\mathcal{Z}(\mathbb{D}_{i})$. We then have $$\rho(G)= \sum_{1 \leq i \leq m} \rho(\mathcal{U}(\mathcal{O}_{i}))$$ and  $$\rho(G/A)= \sum_{r \leq i \leq m} \rho(\mathcal{U}(\mathcal{O}_{i})).$$  Consequently,   $\rho(G/A) \leq \rho(G)$. Thus, to establish  (ii), it suffices to prove that under the natural map $\pi:\mathbb{Z}[G] \rightarrow \mathbb{Z}[G/A]$, any set of linearly independent central units in $\mathbb{Z}[G]$  are mapped to linearly independent central units of $\mathbb{Z}[G/A]$. 
 \par\vspace{.25cm}
 Let $u_{1}, u_{2}, \cdots, u_{t}$ be linearly independent central units in $\mathbb{Z}[G]$.   Denote $\pi(u_{i})$ by  $\overline{u}_{i}$. Suppose $\overline{u}^{k_{1}}_{1}\overline{u}^{k_{2}}_{2} \cdots \overline{u}^{k_{t}}_{t}=\overline{1}$ for some integers $k_{1}, k_{2}, \cdots, k_{t}$. Then $$u^{k_{1}}_{1}u^{k_{2}}_{2}\cdots u^{k_{t}}_{t}-1 \in \Delta(G, A).$$ So $u= u^{k_{1}}_{1}u^{k_{2}}_{2}\cdots u^{k_{t}}_{t} \in 1+ \Delta(G, A)$. Consequently, $ u \equiv a \mod \Delta(G)\Delta(A)$ for some $a \in A$. This gives $ua^{-1} \equiv 1  \mod \Delta(G)\Delta(A)$. Since $G$ is finite and $u$ is central, it follows that  $u^m$ belongs to $1 + \Delta(G)\Delta(A)$ for some $m \geq 1$. Therefore, $u^m$ and hence $u$ is trivial, i.e.,  $u= u^{k_{1}}_{1}u^{k_{2}}_{2}\cdots u^{k_{t}}_{t} \in \pm \mathcal{Z}(G)$.  Raising it to a suitable power and  using the fact that $u_{1}, u_{2}, \cdots, u_{t}$ are linearly independent, it follows that $k_{i}=0$ for all $i$, thus  proving the validity of (ii).
 
 \par\vspace{.25cm}  Conversely, if (ii) holds, then it follows that under the natural map $\pi:\mathbb{Z}[G] \rightarrow \mathbb{Z}[G/A]$, any set of linearly independent central units in $\mathbb{Z}[G]$  are mapped to linearly independent central units of $\mathbb{Z}[G/A]$. Suppose $$u = \sum_{a \in A} u_{a} a \in \mathcal{Z}(\mathcal{U}(\mathbb{Z}[G])) \cap   (1+ \Delta(G)\Delta(A)).$$ Then $\pi(u) =1$ and hence   $\{u\}$  is mapped to a linearly dependent set.  Thus $\{u\}$ is linearly dependent and so  $u$ can't have infinite order.   Consequently $u$ is a trivial central unit and  (i) holds.
\end{proof}

\begin{cor}\label{c0}
	Let $A$ be a finite normal subgroup of a solvable group $G$ and let  
	$H:=A\rtimes G/C_{G}(A)$. Suppose   that  \begin{description}
		\item[(i)] $ \mathcal Z(\mathcal U(\mathbb Z[H]))\cap  (1+ \Delta(H)\Delta(A)) $ is trivial;
		\item[(ii)] $G/C_{G}(A)$ is a \cut-group.
	\end{description}
	Then,
	$\pi(A)\subseteq \{2,3,5,7\}$. \end{cor}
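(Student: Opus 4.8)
The plan is to deduce the statement by showing that the finite group $H$ is itself a \cut-group and then invoking (\cite{Bac17}, Theorem 1.2). First I would record that $H$ is finite and solvable: since $A$ is a finite normal subgroup of $G$, conjugation gives an embedding $G/C_{G}(A)\hookrightarrow \Aut(A)$, so $G/C_{G}(A)$ is finite, and it is solvable as a quotient of the solvable group $G$; moreover $A$ is solvable, being a subgroup of $G$. Hence $H=A\rtimes G/C_{G}(A)$ is a finite solvable group, $A$ is a normal subgroup of $H$, and $H/A\cong G/C_{G}(A)$.

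The key step is to translate hypothesis (i) through Theorem \ref{L0}. Applying that theorem to the finite group $H$ and its normal subgroup $A$, the triviality of $\mathcal{Z}(\mathcal{U}(\mathbb{Z}[H]))\cap(1+\Delta(H)\Delta(A))$ assumed in (i) is exactly the statement $\rho(H)=\rho(H/A)$. Now $H/A\cong G/C_{G}(A)$ is a \cut-group by (ii); since it is finite, $\mathcal{Z}(\mathcal{U}(\mathbb{Z}[H/A]))=\pm\mathcal{Z}(H/A)$ is finite, so $\rho(H/A)=0$. Combining the two facts gives $\rho(H)=0$.

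Finally, for a finite group the centre of the unit group is finitely generated abelian, and its torsion central units are trivial, so $\rho(H)=0$ forces $\mathcal{Z}(\mathcal{U}(\mathbb{Z}[H]))$ to be finite and hence equal to $\pm\mathcal{Z}(H)$; that is, $H$ is a \cut-group. As $H$ is finite and solvable, (\cite{Bac17}, Theorem 1.2) yields $\pi(H)\subseteq\{2,3,5,7\}$, and since $A\leq H$ we conclude $\pi(A)\subseteq\pi(H)\subseteq\{2,3,5,7\}$, as desired. The only real obstacle is the bridge provided by Theorem \ref{L0}: the reason for passing to the semidirect product $H$, rather than working inside $G$, is precisely that $H/A$ is \emph{literally} the \cut-group $G/C_{G}(A)$, so that the rank hypothesis (i) can be converted into the assertion that $H$ too has rank zero, hence is a \cut-group to which the solvable-group prime bound of \cite{Bac17} applies.
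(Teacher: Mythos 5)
Your proof is correct and takes essentially the same route as the paper's: both use Theorem \ref{L0} to convert hypothesis (i) into $\rho(H)=\rho(H/A)$, note that $\rho(H/A)=0$ because $H/A\cong G/C_{G}(A)$ is a finite \cut-group, conclude that the finite solvable group $H$ is a \cut-group, and then apply (\cite{Bac17}, Theorem 1.2) to get $\pi(A)\subseteq\pi(H)\subseteq\{2,3,5,7\}$. The additional details you supply (finiteness and solvability of $H$, and the Berman--Higman-type fact that rank zero forces all central units of $\mathbb{Z}[H]$ to be trivial) are precisely what the paper leaves implicit.
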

\begin{proof}
	Observe  that $H$ is finite. Since $H/A\cong G/C_{G}(A)$ and $G/C_{G}(A)$ is a \cut-group,  $\rho(H/A)=\rho(G/C_{G}(A))=0$. In view of (i), Theorem \ref{L0} yields that   $\rho(H)=\rho(H/A)$. Thus $\rho(H)=0$, i.e., $H$ is a finite solvable \cut-group. Consequently,  by (\cite{Bac17}, Theorem 1.2) ,  $\pi(H)\subseteq \{2,3,5,7\}$, which gives the desired result.  
\end{proof}

\section{Symmetric central units }
\vspace{.5cm} \par Given $u= \sum u_{g}g \in \mathbb{Z}[G]$, let $u^{\ast}= \sum u_{g}g^{-1}$. An element $ u \in \mathbb{Z}[G]$ is called symmetric, if $u = u^{\ast} $.  \para It is known (see \cite{JdR15}, Corollary 7.1.9) that if $G$ is a finite abelian group, then $\mathcal{U}(\mathbb{Z}[G])$ is the direct product of $\pm G$ and a torsion free group of symmetric units. As a generalization of this result, we have the following:  
\begin{theorem}\label{T3} For every  group $G$, the following statements hold:
	\begin{description} 
		\item[(i)] There is an exact sequence $1 \rightarrow \pm \mathcal{Z}(G) \rightarrow \mathcal{Z}(\mathcal{U}(\mathbb{Z}[G])) \rightarrow \mathcal{S}\rightarrow 1$, where $\mathcal{S}$ is a torsion free subgroup of $\mathcal{Z}(\mathcal{U}(\mathbb{Z}[G]))$  consisting of symmetric central units.  
		\item[(ii)] If $u\in \mathcal{Z}(\mathcal{U}(\mathbb{Z}[G])),$ then there exists an element  $g \in \mathcal{Z}(G)$ such that $u=gu^{\ast}$. Furthermore, $u^{2} \in  \mathcal{Z}(G)\mathcal{S}$.  \item[(iii)] If the symmetric central units of $\mathbb{Z}[G]$ are trivial, then so are all central units, i.e., $G$ is a \cut-group. \end{description} \end{theorem}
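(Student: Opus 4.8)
My plan is to handle all three parts of Theorem~\ref{T3} through a single device, the classical involution $*\colon \mathbb{Z}[G]\to\mathbb{Z}[G]$, $\sum u_g g\mapsto \sum u_g g^{-1}$. First I would record its elementary properties: it is an additive, augmentation-preserving anti-automorphism, and if $u$ is central then so is $u^{*}$ (apply $*$ to $ug^{-1}=g^{-1}u$ to obtain $gu^{*}=u^{*}g$ for every $g\in G$). Hence $*$ restricts to an automorphism of order at most two of the \emph{abelian} group $\mathcal{Z}(\mathcal{U}(\mathbb{Z}[G]))$, and the assignment $\psi(u):=uu^{*}$ is a group homomorphism on $\mathcal{Z}(\mathcal{U}(\mathbb{Z}[G]))$ whose image consists of symmetric central units. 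I would then define $\mathcal{S}:=\psi\bigl(\mathcal{Z}(\mathcal{U}(\mathbb{Z}[G]))\bigr)$, so that (i)--(iii) reduce to computing $\ker\psi$ and showing that $\mathcal{S}$ is torsion free.

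The kernel is elementary. If $vv^{*}=1$, then the coefficient of $1$ in $vv^{*}$ equals $\sum_g v_g^{2}=1$ with all $v_g\in\mathbb{Z}$, which forces exactly one $v_g=\pm1$; thus $v=\pm g_0$ with $g_0$ central, i.e.\ $v\in\pm\mathcal{Z}(G)$. Conversely $(\pm g)(\pm g)^{*}=gg^{-1}=1$, so $\ker\psi=\pm\mathcal{Z}(G)$, which yields the exact sequence of (i) once $\mathcal{S}$ is known to be torsion free. For (ii) I would apply the same kernel computation to $v:=u(u^{*})^{-1}$: a direct check gives $v^{*}=v^{-1}$, whence $v\in\pm\mathcal{Z}(G)$, and since the augmentation $\varepsilon(v)=1$ we get $v=g\in\mathcal{Z}(G)$, that is $u=gu^{*}$. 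Substituting $u^{*}=g^{-1}u$ then gives $u^{2}=g\,(uu^{*})\in\mathcal{Z}(G)\,\mathcal{S}$, the second assertion of (ii).

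The crux, and the step I expect to be the main obstacle, is the torsion freeness of $\mathcal{S}$: a torsion element $s=uu^{*}$ must equal $1$. Integrality alone does not suffice; the extra ingredient is positivity of the canonical trace $\tau(\sum x_g g):=x_1$, which satisfies $\tau(xx^{*})=\sum_g x_g^{2}\ge 0$ for every $x\in\mathbb{Z}[G]$, with equality only for $x=0$. Given $s=s^{*}$ and $s^{n}=1$, I would first establish $s^{2}=1$; this is the one genuinely non-combinatorial point, and I would justify it by realising $s$ as a bounded self-adjoint operator on $\ell^{2}(G)$ via the faithful left regular representation, so that its spectrum is real and contained in the $n$th roots of unity, hence in $\{\pm1\}$, giving $s^{2}=1$. (In the spirit of the present paper, the same conclusion can instead be reached by a reduction to a finite group using Lemma~\ref{e0} together with the Berman--Higman theorem, which says torsion central units are trivial.) Once $s^{2}=1$ is in hand, the finish is again elementary: $0\le \tau\bigl((1-s)(1-s)^{*}\bigr)=\tau(2-2s)=2-2\tau(s)$, while $\tau(s)=\sum_g u_g^{2}\ge 1$, so $\tau\bigl((1-s)(1-s)^{*}\bigr)=0$ and therefore $s=1$ by faithfulness of $\tau$.

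Granting (i), part (iii) is immediate. If every symmetric central unit is trivial, then $\mathcal{S}\subseteq\pm\mathcal{Z}(G)$, since $\mathcal{S}$ consists of symmetric central units. But a symmetric trivial unit $\pm g$ satisfies $g=g^{-1}$, hence has order at most two; thus $\mathcal{S}$ would be a torsion free group all of whose elements are torsion, forcing $\mathcal{S}=\{1\}$. By the exact sequence of (i) this gives $\mathcal{Z}(\mathcal{U}(\mathbb{Z}[G]))=\pm\mathcal{Z}(G)$, i.e.\ $G$ is a \cut-group.
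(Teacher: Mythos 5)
Your proposal is correct, and its skeleton coincides with the paper's: both proofs rest on the homomorphism $u\mapsto uu^{\ast}$ on the abelian group $\mathcal{Z}(\mathcal{U}(\mathbb{Z}[G]))$, compute its kernel to be $\pm\mathcal{Z}(G)$ via the coefficient of the identity, take $\mathcal{S}$ to be its image, and then deduce (ii) and (iii) by formal manipulation (your derivation of (ii) via $v=u(u^{\ast})^{-1}$ with $vv^{\ast}=1$ is a harmless variant of the paper's comparison $\theta(u^{2})=\theta(uu^{\ast})$). The genuine divergence is at the step you correctly single out as the crux, torsion-freeness of $\mathcal{S}$. The paper dispatches it by invoking, without proof or citation, the generalized Berman--Higman fact that a central unit of finite order in $\mathbb{Z}[G]$ lies in $\pm\mathcal{Z}(G)$ for an \emph{arbitrary} group $G$ (a nontrivial result whose usual proof goes through the FC-center and the structure of finitely generated FC-groups), and then finishes with the same coefficient-of-identity computation you use. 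You instead prove exactly what is needed from scratch: realizing the symmetric torsion unit $s$ as a bounded self-adjoint operator on $\ell^{2}(G)$, concluding $\sigma(s)\subseteq\{\pm1\}$ and hence $s^{2}=1$ by the spectral theorem, and then finishing with positivity and faithfulness of the trace $\tau$. This buys self-containedness --- no appeal to the infinite-group Berman--Higman theorem or FC-theory --- at the price of importing basic operator theory; it also only establishes triviality for \emph{symmetric} torsion central units, which is all the theorem needs, whereas the paper's quoted result is stronger. Your parenthetical alternative (reduction via Lemma \ref{e0} plus Berman--Higman) is essentially the route the paper takes implicitly, so you have in effect supplied both the paper's argument and an independent one.
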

\begin{proof} (i) Observe that the map $$\theta: \mathcal{Z}(\mathcal{U}(\mathbb{Z}[G])) \rightarrow \mathcal{Z}(\mathcal{U}(\mathbb{Z}[G])),\quad u\mapsto uu^{\ast},$$ is a group homomorphism and $\operatorname{ker}\theta = \pm \mathcal{Z}(G)$. Let $\mathcal{S}$ be the image of $\theta$. We will show that $\mathcal{S}$  is torsion free. Let $ u \in \mathcal{Z}(\mathcal{U}(\mathbb{Z}[G]))$.  If $uu^{\ast}$ is  of finite order, then it belongs to $\pm \mathcal{Z}(G)$. Suppose $$uu^{\ast}= \pm h,$$ where $h \in \mathcal{Z}(G)$. If  $h \neq 1$, then from the above equation it follows that 
	$\sum_{g \in G} u_{g}^{2}= 0$, which is not so as $ u \neq  0$.  Hence, $ h =1$ and  $uu^{\ast} = \pm 1$. However, $uu^{\ast}$ can't be $-1$ as the augmentation of $uu^{\ast}$ is $1$. Thus $uu^{\ast} =1$ and consequently, $\mathcal{S}$ is torsion free. 
	\vspace{.2cm} \\(ii) As $u$ is central in $\mathbb{Z}[G]$, it can be readily verified that $\theta(u^{2}) = \theta(uu^{\ast})$. Hence, by (i), $ u^2=\pm guu^{\ast}$, where $g \in \mathcal{Z}(G)$.  Since both $u^2$ and $uu^\ast$ are of augmentation $1$, we have $ u^2= guu^{\ast}$.  Hence, (ii) follows. \vspace{.2cm}\\
	(iii) If symmetric central units are  trivial, then from (ii) it follows that $u^{2} \in  \mathcal{Z}(G)$ for all  $ \mathcal{Z}(\mathcal{U}(\mathbb{Z}[G]))$. However, by (i),  $\mathcal{Z}(\mathcal{U}(\mathbb{Z}[G]))/\pm \mathcal{Z}(G)$ is torsion free. Therefore, (iii) follows. 
\end{proof}	

In the above theorem, (ii) may be campared with (\cite{MS99}, Theorem 1). An immediate consequence of the above theorem is the following:
\begin{cor}
	For every group $G$ with periodic center, in particular if $G$ is a finite group,  $\mathcal Z(\mathcal U(\mathbb Z[G]))/\mathcal Z_\mathcal S(\mathcal U(\mathbb Z[G])) $ is a torsion group, where $\mathcal Z_\mathcal S(\mathcal U(\mathbb Z[G])) $ is the subgroup of $\mathcal Z(\mathcal U(\mathbb Z[G]))$ consisting of symmetric central units. \end{cor}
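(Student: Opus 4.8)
The plan is to deduce this almost immediately from Theorem \ref{T3}(ii), so that no substantial new argument is needed. First I would record the inclusion $\mathcal{S} \subseteq \mathcal{Z}_{\mathcal{S}}(\mathcal{U}(\mathbb{Z}[G]))$: by construction every element of $\mathcal{S}$ has the form $uu^{\ast}$ for some central unit $u$, and since $(uu^{\ast})^{\ast}=uu^{\ast}$, such an element is a symmetric central unit. Along the way one checks that $\mathcal{Z}_{\mathcal{S}}(\mathcal{U}(\mathbb{Z}[G]))$ is genuinely a subgroup, which is routine: if $v,w$ are symmetric central units then $(vw)^{\ast}=w^{\ast}v^{\ast}=wv=vw$ and $(v^{-1})^{\ast}=(v^{\ast})^{-1}=v^{-1}$, the middle equalities using that central units commute.

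Next, given an arbitrary $u\in\mathcal{Z}(\mathcal{U}(\mathbb{Z}[G]))$, I would invoke Theorem \ref{T3}(ii) to write $u^{2}=gs$ with $g\in\mathcal{Z}(G)$ and $s\in\mathcal{S}$. This is the only place where the hypothesis that $\mathcal{Z}(G)$ is periodic enters: it guarantees that $g$ has finite order, say $g^{k}=1$. Because $g$ and $s$ are both central units they commute, so raising $u^{2}=gs$ to the $k$-th power gives $u^{2k}=g^{k}s^{k}=s^{k}\in\mathcal{S}\subseteq\mathcal{Z}_{\mathcal{S}}(\mathcal{U}(\mathbb{Z}[G]))$. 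Hence the image of $u$ in the quotient $\mathcal{Z}(\mathcal{U}(\mathbb{Z}[G]))/\mathcal{Z}_{\mathcal{S}}(\mathcal{U}(\mathbb{Z}[G]))$ is annihilated by $2k$, and since $u$ was arbitrary the quotient is torsion. For the ``in particular'' clause I would simply note that a finite group has finite, hence periodic, center.

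I do not expect any genuine obstacle here, since all the substance is already packaged in Theorem \ref{T3}(ii); the only points requiring care are the elementary verifications that $\mathcal{Z}_{\mathcal{S}}(\mathcal{U}(\mathbb{Z}[G]))$ is a subgroup containing $\mathcal{S}$, and the observation that the exponent $2k$ is permitted to depend on $u$. The latter is harmless, because being a torsion group only requires each element to have finite order, not a common bound. I would remark, however, that if one wanted a \emph{uniform} exponent for the quotient one would have to strengthen the hypothesis to $\mathcal{Z}(G)$ being of finite exponent; the conclusion as stated makes no such demand.
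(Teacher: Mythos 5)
Your proof is correct and follows exactly the route the paper intends: the corollary is stated there as an immediate consequence of Theorem \ref{T3}, and your argument --- writing $u^{2}=gs$ with $g\in\mathcal{Z}(G)$ torsion and $s\in\mathcal{S}$ via Theorem \ref{T3}(ii), then raising to the power $k=o(g)$ to land in the symmetric central units --- is precisely that deduction. The preliminary verifications (that $\mathcal{Z}_{\mathcal{S}}(\mathcal{U}(\mathbb{Z}[G]))$ is a subgroup containing $\mathcal{S}$) and the remark that the exponent may depend on $u$ are sound and appropriately flagged.
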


\section{Hypercentral units } 
Let $\mathcal{Z}_{0}(\mathcal{U})\leq \mathcal{Z}_{1}(\mathcal{U})\leq \cdots \leq \mathcal{Z}_{n}(\mathcal{U})\leq \cdots $ be the upper central series of $ \mathcal{U}= \mathcal{U}(\mathbb{Z}[G])$. Let $\mathcal{Z}_{\infty}(\mathcal{U})= \cup_{i \geq 1} \mathcal{Z}_{i}(\mathcal{U})$ be the subgroup of $\mathcal{U}(\mathbb{Z}[G])$ consisting of the hypercentral units. 
\begin{theorem}\label{T0} For all $i \geq 2$, $\mathcal{Z}_{i}(\mathcal{U})/\mathcal{Z}_{i-1}(\mathcal{U})$ is of finite exponent, provided  $\mathcal{Z}(G)$ is of finite exponent or $G$ is generated by torsion elements of bounded exponent.  \end{theorem}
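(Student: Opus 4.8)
The plan is to induct on $i$, reducing the whole assertion to the base case $i=2$. I first record that each $\mathcal{Z}_{i}(\mathcal{U})$ is characteristic and hence invariant under the involution $u\mapsto u^{\ast}=\sum u_{g}g^{-1}$, because $u\mapsto(u^{-1})^{\ast}$ is an automorphism of $\mathcal{U}$. For the base case, fix $u\in\mathcal{Z}_{2}(\mathcal{U})$; for $g\in G$ the commutator $z_{g}:=[u,g]=u^{-1}g^{-1}ug$ lies in $\mathcal{Z}_{1}(\mathcal{U})=\mathcal{Z}(\mathcal{U})$. The identity $[u,gh]=[u,h]\,[u,g]^{h}$ together with the centrality of $z_{g}$ shows that $g\mapsto z_{g}$ is a homomorphism $G\to\mathcal{Z}(\mathcal{U})$ and that $[u^{k},g]=z_{g}^{k}$. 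Since $\mathcal{Z}(\mathcal{U})=\mathcal{U}\cap\mathcal{Z}(\mathbb{Z}[G])$ and a unit is central as soon as it commutes with every $g\in G$, we have $u^{N}\in\mathcal{Z}_{1}(\mathcal{U})$ precisely when $z_{g}^{N}=1$ for all $g$. Thus the case $i=2$ reduces to bounding the orders of the central units $z_{g}$.

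The heart of the proof is the claim that every $z_{g}$ is in fact a trivial central unit, i.e. $z_{g}\in\mathcal{Z}(G)$. To prove it I set $w:=uu^{\ast}$, which is symmetric and again lies in $\mathcal{Z}_{2}(\mathcal{U})$, and put $y:=[w,g]$. A direct computation (using $z_{g}^{\ast}=[u^{\ast},g]$ and the centrality of $z_{g}$) gives $y=z_{g}z_{g}^{\ast}$, so $y$ is a symmetric central unit of augmentation $1$; moreover the relation $w^{g}=wy$ yields $y^{n}=w^{-1}w^{g^{n}}$ for all $n\in\mathbb{Z}$. As $w^{g^{n}}$ is a conjugate of $w$ it has the same coefficients and the same support size as $w$, so the coefficients and the support size of $y^{n}=w^{-1}w^{g^{n}}$ are bounded uniformly in $n$; hence $\|y^{n}\|^{2}:=\sum_{h}(y^{n})_{h}^{2}\le C$ for a constant $C$ independent of $n$. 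Writing $\tau$ for the coefficient of $1$ and $\langle a,b\rangle=\tau(ab^{\ast})$ for the associated positive definite form, symmetry of $y$ gives $\tau(y^{2n})=\|y^{n}\|^{2}$, while Cauchy--Schwarz gives $\tau(y^{2m})\ge\tau(y^{m})^{2}$. Iterating along powers of two, $\tau(y)^{2^{k}}\le\tau(y^{2^{k}})\le C$ for all $k$, which forces the positive integer $\tau(y)=\tau(z_{g}z_{g}^{\ast})=\sum_{h}(z_{g})_{h}^{2}$ to be $1$. Therefore $z_{g}$ has a single nonzero coefficient, equal to $1$ by its augmentation, so $z_{g}\in\mathcal{Z}(G)$.

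Granting the claim, the two hypotheses now bound the orders of the $z_{g}$ and finish the base case. If $\mathcal{Z}(G)$ has finite exponent $m$, then $z_{g}^{m}=1$ for every $g$, so $u^{m}\in\mathcal{Z}_{1}(\mathcal{U})$ and $\mathcal{Z}_{2}(\mathcal{U})/\mathcal{Z}_{1}(\mathcal{U})$ has exponent dividing $m$. If instead $G=\langle t_{\lambda}\rangle$ with each $t_{\lambda}$ of order dividing $e$, then $w^{t_{\lambda}^{e}}=w$ gives $z_{t_{\lambda}}^{e}=1$, and since $g\mapsto z_{g}$ is a homomorphism into the abelian group $\mathcal{Z}(\mathcal{U})$ the subgroup it generates has exponent dividing $e$; hence $z_{g}^{e}=1$ for all $g$, $u^{e}\in\mathcal{Z}_{1}(\mathcal{U})$, and $\mathcal{Z}_{2}(\mathcal{U})/\mathcal{Z}_{1}(\mathcal{U})$ has exponent dividing $e$.

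Finally I would pass from $i=2$ to arbitrary $i\ge 2$ by induction. For $u\in\mathcal{Z}_{i}(\mathcal{U})$ and $w\in\mathcal{U}$ one has $[u,w]\in\mathcal{Z}_{i-1}(\mathcal{U})$, and working modulo $\mathcal{Z}_{i-2}(\mathcal{U})$ and using $[\mathcal{Z}_{i-1}(\mathcal{U}),\mathcal{U}]\subseteq\mathcal{Z}_{i-2}(\mathcal{U})$ one checks $[u,w_{1}w_{2}]\equiv[u,w_{1}][u,w_{2}]$ and $[u^{k},w]\equiv[u,w]^{k}\pmod{\mathcal{Z}_{i-2}(\mathcal{U})}$. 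If $N$ is the (finite, by the inductive hypothesis) exponent of $\mathcal{Z}_{i-1}(\mathcal{U})/\mathcal{Z}_{i-2}(\mathcal{U})$, then $[u,w]^{N}\in\mathcal{Z}_{i-2}(\mathcal{U})$, so $[u^{N},w]\in\mathcal{Z}_{i-2}(\mathcal{U})$ for every $w\in\mathcal{U}$, i.e. $u^{N}\in\mathcal{Z}_{i-1}(\mathcal{U})$; thus $\mathcal{Z}_{i}(\mathcal{U})/\mathcal{Z}_{i-1}(\mathcal{U})$ has exponent dividing $N$. The main obstacle is the claim of the second paragraph: the positivity/Cauchy--Schwarz estimate on $\sum_{h}(z_{g})_{h}^{2}$ is exactly what excludes $[u,g]$ being a central unit of infinite order, and this is where the boundedness of the conjugates $w^{g^{n}}$ enters essentially.
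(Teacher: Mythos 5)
Your proof is correct, and although its skeleton coincides with the paper's (reduce the statement for all $i\geq 2$ to the single quotient $\mathcal{Z}_{2}(\mathcal{U})/\mathcal{Z}_{1}(\mathcal{U})$, show that the commutators $[u,g]$, $g\in G$, lie in $\mathcal{Z}(G)$, then kill them using the exponent hypotheses), you reach it by a genuinely more self-contained route. The paper outsources both nontrivial steps to the literature: the reduction to $i=2$ is cited from P.\ Hall (\cite{Hal88}, Lemma 4.2), and the key fact that $[u,g]\in\mathcal{Z}(G)$ for $u\in\mathcal{Z}_{2}(\mathcal{U})$ is cited from \cite{HIJJ07}, Proposition 4.1. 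You prove both from scratch: your induction modulo $\mathcal{Z}_{i-2}(\mathcal{U})$ is exactly Hall's argument, and, more substantially, your second paragraph --- the identity $[uu^{\ast},g]=z_{g}z_{g}^{\ast}$, the uniform bound on the coefficients of $(z_{g}z_{g}^{\ast})^{n}=w^{-1}w^{g^{n}}$ coming from conjugation, and the Berman--Higman-style positivity/Cauchy--Schwarz iteration $\tau(y)^{2^{k}}\leq\tau(y^{2^{k}})\leq C$ --- is a correct, self-contained proof of the cited proposition. What the paper's approach buys is brevity; what yours buys is independence from \cite{HIJJ07}, at the cost of about a page of work. (Note also that in the second hypothesis the deep claim is not needed at all: the homomorphism property of $g\mapsto z_{g}$ suffices, which is likewise how the paper argues via $[u,x]^{m}=[u,x^{m}]=1$.)

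One small slip to repair: in your last paragraph of the base case you assert that $w^{t_{\lambda}^{e}}=w$ gives $z_{t_{\lambda}}^{e}=1$; literally it gives only $(z_{t_{\lambda}}z_{t_{\lambda}}^{\ast})^{e}=1$. The conclusion you want follows instead, in one line, from the homomorphism property you already established: $z_{t_{\lambda}}^{e}=z_{t_{\lambda}^{e}}=z_{1}=1$. With that substitution the argument is complete.
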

\begin{proof} In view of (\cite{Hal88}, Lemma 4.2, p. 432), it suffices to show that $\mathcal{Z}_{2}(\mathcal{U})/\mathcal{Z}_{1}(\mathcal{U})$ is of finite exponent.  Let $u \in \mathcal{Z}_{2}(\mathcal{U})$.  Suppose $\mathcal{Z}(G)$ is of finite exponent, say $m$.  Consider an arbitrary $g \in G$.  By (\cite{HIJJ07}, Proposition 4.1),  $[u, g] \in \mathcal{Z}(G)$.  Therefore,  $[u^{m}, g] = [u,g]^{m} =1$, i.e., $u^{m} \in \mathcal{Z}_{1}(\mathcal{U})$, as desired.  Next, suppose that $G$ is generated by $X$ and the elements of $X$ have bounded exponent, say $m$.  Then, for any $ x \in X$,  $[u,x] \in \mathcal{Z}_{1}(\mathcal{U})$ implies that $[u^{m}, x] = [u, x]^{m} =[u, x^{m}]=1$. Consequently $u^{m}$ is a central unit, as desired. This proves the result. 
 \end{proof} 
\begin{cor}\label{T2} If $G$ is such that all units in $\mathbb{Z}[G]$ are hypercentral and one of the following conditions hold: \begin{description}
	\item[(i)] $\mathcal{Z}(G)$ is of finite exponent; \item [(ii)] $G$ is generated by torsion elements of bounded exponent, \end{description}  then $\mathcal{U}(\mathbb{Z}[G])$ can't contain a free subgroup of rank $\geq 2$. \end{cor}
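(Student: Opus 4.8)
The plan is to combine Theorem~\ref{T0} with the Tits alternative. The statement is a corollary, so the real work has already been done; I only need to chain the pieces together, and the main obstacle is simply ensuring that the hypotheses of Theorem~\ref{T0} deliver exactly what the Tits-type dichotomy needs.

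First I would set up a proof by contradiction. Suppose $\mathcal{U}(\mathbb{Z}[G])$ does contain a free subgroup $F$ of rank $\geq 2$. Since by hypothesis every unit of $\mathbb{Z}[G]$ is hypercentral, we have $\mathcal{U} = \mathcal{U}(\mathbb{Z}[G]) = \mathcal{Z}_{\infty}(\mathcal{U}) = \bigcup_{i \geq 1} \mathcal{Z}_{i}(\mathcal{U})$. The key observation is that a free group of rank $\geq 2$ is \emph{not} hypercentral: its upper central series is trivial because a nonabelian free group has trivial center, so $\mathcal{Z}_1(F) = \{1\}$ and hence $\mathcal{Z}_i(F) = \{1\}$ for all $i$. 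Thus $F$ cannot sit inside $\mathcal{Z}_{\infty}(\mathcal{U})$ unless $F$ is itself already trapped at some finite stage in a way that forces nilpotency. I would make this precise as follows: intersecting the upper central series of $\mathcal{U}$ with $F$ gives a central series of $F$ with $F \cap \mathcal{Z}_0(\mathcal{U}) = \{1\}$, and since $F = \bigcup_i (F \cap \mathcal{Z}_i(\mathcal{U}))$, the group $F$ would be hypercentral, contradicting that a nonabelian free group is centerless.

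Alternatively, and more in the spirit of the theorem preceding it, I would exploit the finite-exponent conclusion of Theorem~\ref{T0} directly. Under either hypothesis (i) or (ii), Theorem~\ref{T0} asserts that each quotient $\mathcal{Z}_{i}(\mathcal{U})/\mathcal{Z}_{i-1}(\mathcal{U})$ has finite exponent for $i \geq 2$. If $\mathcal{U} = \mathcal{Z}_{\infty}(\mathcal{U})$ contained a free subgroup $F$ of rank $\geq 2$, consider the image of $F$ in the successive quotients of the upper central series. A free group of rank $\geq 2$ contains no nontrivial elements of finite order and has no nontrivial abelian (in fact no proper nontrivial central) quotients of finite exponent at any layer; pushing $F$ through the hypercentral filtration would force a nontrivial free-group section to embed into a group of finite exponent, which is impossible since free groups are torsion-free and the only torsion-free group of finite exponent is trivial.

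The hard part, such as it is, will be phrasing the induction on the central series cleanly so that the contradiction is airtight: I must verify that intersecting a free subgroup with a (transfinite or countable) ascending central series of the ambient group indeed endows the free subgroup with its own ascending central series terminating in the whole group, which is the standard fact that a subgroup of a hypercentral group is hypercentral. Once that is in place, the conclusion is immediate from the centerlessness of nonabelian free groups, and neither hypothesis (i) nor (ii) needs to be invoked beyond guaranteeing, via Theorem~\ref{T0}, that the relevant layers are well-behaved. I would present the argument in the short contradiction form, citing Theorem~\ref{T0} for the finite-exponent property and the elementary fact that a nonabelian free group is not hypercentral.
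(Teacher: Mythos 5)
Your proposal is correct, but your main argument takes a genuinely different route from the paper's. The paper argues element-by-element: given $1 \neq u \in F$, hypercentrality places $u$ in $\mathcal{Z}_i(\mathcal{U})$ for some finite $i$, and Theorem \ref{T0} --- this is exactly where hypotheses (i)/(ii) enter --- gives finite exponents for the layers $\mathcal{Z}_j(\mathcal{U})/\mathcal{Z}_{j-1}(\mathcal{U})$, $j \geq 2$, so some power $u^{m}$ lies in $\mathcal{Z}_1(\mathcal{U})$; then $u^{m}$ commutes with all of $F$, hence $u^{m} \in \mathcal{Z}(F)=\{1\}$, contradicting that $F$ is torsion-free. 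Your first argument instead intersects the upper central series of $\mathcal{U}$ with $F$: the subgroups $F \cap \mathcal{Z}_i(\mathcal{U})$ form an ascending central series of $F$ with union $F$, so $F$ is hypercentral, while a nontrivial hypercentral group has nontrivial center and a nonabelian free group is centerless. This is valid (it is the standard proof that subgroups of hypercentral groups are hypercentral) and strictly more general: it uses neither Theorem \ref{T0} nor hypotheses (i)/(ii), so it shows that ``all units hypercentral'' alone already excludes free subgroups of rank $\geq 2$. What the paper's route buys is a very short argument that reuses Theorem \ref{T0}, which is the point of stating this as a corollary; what your route buys is economy of hypotheses and independence from the exponent estimates. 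Your second sketch, the one closer to the paper, is left vague (``pushing $F$ through the hypercentral filtration''); if you wanted to run it, the clean completion is inductive: $F \cap \mathcal{Z}_1(\mathcal{U}) \leq \mathcal{Z}(F) = \{1\}$, and if $F \cap \mathcal{Z}_{i-1}(\mathcal{U}) = \{1\}$ then $F \cap \mathcal{Z}_i(\mathcal{U})$ embeds into the finite-exponent layer $\mathcal{Z}_i(\mathcal{U})/\mathcal{Z}_{i-1}(\mathcal{U})$ while being free (Nielsen--Schreier), hence torsion-free, hence trivial --- but your first argument renders this unnecessary.
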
 [For the classification of groups $G$ with all units hypercentral, see \cite{IJ08}.]
\begin{proof} Suppose $F$ is a free subgroup of rank $\geq 2$ contained in $\mathcal{U}(\mathbb{Z}G)$. Consider any $1 \neq u \in F$. As $u$ is hypercentral, by Theorem \ref{T0}, it follows that a power of $u$ belongs to $\mathcal{Z}_{1}(\mathcal{U})$; let this power be $m$. So $u^{m}$ commutes with all elements of $F$, i.e., it belongs to $\mathcal{Z}(F)$. But $F$ being free of rank $\geq 2$, has trivial center. Consequently $u^{m}=1$. This is not possible, as $F$ is free.   \end{proof}

\par\vspace{.5cm}\centerline{\bf Acknowledgement}\par\vspace{.25cm}
Inder Bir S. Passi is thankful to the Indian National Science Academy, New Delhi (India) for their support and to Ashoka University, Sonipat (India), for making available their facilities.
 
\providecommand{\bysame}{\leavevmode\hbox to3em{\hrulefill}\thinspace}
\providecommand{\MR}{\relax\ifhmode\unskip\space\fi MR }
\providecommand{\MRhref}[2]{%
	\href{http://www.ams.org/mathscinet-getitem?mr=#1}{#2}
}
\providecommand{\href}[2]{#2}

\end{document}